\newtheorem{theorem}{Theorem}
\newtheorem{lemma}{Lemma}
\newtheorem{corollary}{Corollary}
\newtheorem{proposition}{Proposition}
\newtheorem{definition}{Definition}
\theoremstyle{remark}
\newtheorem{example}{Example}
\theoremstyle{definition}
\newtheorem{ass}{Assumption}
\newtheorem{open}{Open Problem}
\newtheorem{remark}{Remark}
\newtheorem*{notation}{Notation}
\newcommand{\norm}[1]{\left\Vert#1\right\Vert}
\newcommand{\bsgamma}{\boldsymbol{\gamma}}
\newcommand{\bsk}{\boldsymbol{k}}
\newcommand{\bsx}{\boldsymbol{x}}
\newcommand{\bsg}{\boldsymbol{g}}
\newcommand{\calF}{{\mathcal{F}}}
\newcommand{\icomp}{\mathtt{i}}
\newcommand{\NN}{\mathbb{N}}
\newcommand{\N}{\mathbb{N}}
\newcommand{\ZZ}{\mathbb{Z}}
\newcommand{\RR}{\mathbb{R}}
\newcommand{\CC}{\mathbb{C}}
\newcommand{\KK}{\mathbb{K}}
\newcommand{\calH}{\mathcal{H}}
\newcommand{\calG}{\mathcal{G}}
\newcommand{\calA}{\mathcal{A}}
\newcommand{\calC}{\mathcal{C}}
\newcommand{\EE}{\mathbb{E}}
\newcommand{\abs}[1]{\left\vert#1\right\vert}
\newcommand{\err}{{\rm err}}
\DeclareMathOperator*{\cost}{cost}
\DeclareMathOperator*{\comp}{comp}
\DeclareMathOperator*{\diam}{diam}
\title{Homogeneous algorithms and solvable problems on cones}
\author{David Krieg and Peter Kritzer}
\date{\today}
\begin{document}

\maketitle

\begin{abstract}
We consider linear problems in the worst case setting.
That is, given a linear operator and a pool of admissible linear measurements, we want to approximate the values of the operator uniformly on a convex and balanced set
by means of algorithms that use at most $n$ such measurements.
It is known that, in general, linear algorithms do not yield an optimal approximation. 
However, as we show in this paper,
an optimal approximation can always be obtained with a homogeneous algorithm.
This is of interest to us for two reasons. First, the homogeneity allows us to extend any error bound on the unit ball to the full input space. 
Second, homogeneous algorithms are better suited to tackle problems on cones, 
a scenario that is far less understood than the classical situation of balls. 
We use the optimality of homogeneous algorithms to prove solvability for a family of problems defined on cones.
We illustrate our results by several examples. 
\end{abstract}

\section{Introduction}

In this paper, we establish the following two findings:
\begin{itemize}
\item[(1)] Homogeneous algorithms are optimal for any linear problem.
\item[(2)] Item (1) can be used to prove solvability of a familiy of problems defined on cones.
\end{itemize}

\subsection{Linear Problems}

We consider problems 
given by a solution operator $S\colon\calF\to \calG$
between normed spaces $\calF$ and $\calG$ over $\KK \in \{\RR,\CC\}$, 
a class $\Lambda \subseteq \KK^{\calF}$ 
of admissible measurements,
and a class $F\subseteq \calF$ of inputs.
We want to approximate the solution $S(f)$ for some unknown $f\in F$
based on the outcome of a finite number $n$ of measurements
$L_1(f),\hdots,L_n(f)$.
The measurements $L_1,\hdots,L_n$ shall be contained in the class 
$\Lambda$ 
and may be chosen adaptively, i.e., 
the choice of $L_i\in\Lambda$ may depend on the already computed
$L_1(f),\hdots,L_{i-1}(f)$.\footnote{In principle, also the number $n=n(f)$ of measurements could be 
chosen adaptively; however, in the setting of $n$th minimal worst-case errors considered in the first three sections, 
such algorithms can be identified with
algorithms using a fixed $n$
(which can be chosen as 
the maximum of all $n(f)$). 
In Section \ref{sec:cones}, the adaptive choice of $n$ will be of importance.}
This results in an \emph{information mapping} of the form
\begin{equation}\label{eq:info_map}
 N\colon \calF\to\KK^n,
 \quad
 N(f) =(L_1(f),\hdots,L_n(f)).
\end{equation}
We consider 
approximations
of the form
\begin{equation}\label{eq:alg_general}
 A_n \colon \calF \to \calG, \quad A_n(f)=\varphi(N(f)), 
\end{equation}
where $\varphi \colon \KK^n \to \calG$ is an arbitrary mapping and often called the \emph{recovery map}.
A mapping of the form \eqref{eq:alg_general} is called an \emph{algorithm}.
If the $L_i$ are not chosen adaptively, i.e.,
$L_1,\hdots,L_n\in\Lambda$ are the same for all $f\in\calF$,
then the algorithm $A_n$ is called non-adaptive.
The error of an algorithm $A_n$ is measured in the norm of $\calG$ and in
the worst case over the given input class $F\subseteq \calF$,
that is,
\begin{equation}\label{def:wc}
 \err(A_n) \,=\,
 \err(A_n,S,F) \,:=\,
 \sup_{f \in F}\, \Vert S(f) - A_n(f) \Vert_\calG. 
\end{equation}
A problem of this form is called a \textit{linear problem} if the following conditions hold.
\begin{samepage}
\begin{itemize}
\item[(1)] The input class $F$ is non-empty, convex (i.e., $\lambda f + (1-\lambda)g \in F$ for all $f,g\in F$ and $\lambda\in (0,1)$), and balanced (i.e., $\lambda f\in F$ for all $f\in F$ and $\lambda\in \KK$ with $|\lambda|\le 1$),
\item[(2)] the solution operator $S\colon\calF \to \calG$ is linear,
\item[(3)] the class $\Lambda$ of admissible measurements contains only linear functionals.
\end{itemize}
\end{samepage}
Typical examples of linear problems are \emph{approximation problems}, where we have $S(f)=f$,
and \emph{integration problems}, where $\calF$ is some class of integrable functions and $S(f)$ is the integral of $f$. 
Examples for the class $\Lambda$ of admissible measurements
are the class $\Lambda^{\rm all} := \calF'$ of all continuous linear functionals (\emph{linear information})
or the class $\Lambda^{\rm std}$ of all function evaluations (\emph{standard information})
if $\calF$ is a function space.
The input class $F$ often equals the unit ball of the space $\calF$ but the model also allows for so-called approximation sets of the form
\[
 F \,=\, \big\{ f\in \calF \,\mid\, {\rm dist}_{\calF}(f,\mathcal{V}) \le \delta\big\},
\]
where $\mathcal{V}$ is a (typically finite-dimensional) subspace of $\cal F$ and $\delta>0$.
We refer to \cite[Section 4.2]{NW08} and the references therein for further details
on the model of linear problems.

\subsection{Homogeneous algorithms for linear problems}

The class of all algorithms of the form \eqref{eq:alg_general}, which we denote by $\mathcal{A}_n$, is quite huge
and contains very complicated and impractical mappings, 
so it is natural to ask whether an (almost) optimal error bound
can already be achieved with simpler algorithms,
obeying a special structure.
One class of particularly simple algorithms are \textit{linear algorithms}
(i.e., algorithms of the form \eqref{eq:alg_general} 
with a non-adaptive and linear information mapping $N$ and a linear recovery map $\varphi$).
It is a classical result in Information-Based Complexity
that linear algorithms are optimal for linear problems in the case
that $\calG=\KK$,
that is, if the solution operator $S$ is a linear functional. Then we have
\begin{equation}\label{eq:linear-optimal}
 \inf_{A_n \in \mathcal{A}_n^{\rm lin}} \err(A_n) \,=\, \inf_{A_n \in \mathcal{A}_n} \err(A_n),
\end{equation}
where $\mathcal{A}_n^{\rm lin}$ denotes the class of all linear algorithms of the form \eqref{eq:alg_general}.
This result goes back to Smolyak \cite{S65} and Bakhvalov \cite{Bak71},
see also \cite{Don94,FP23,Pla96} for similar results in the presence of noise.
Other instances where linear algorithms are optimal for linear problems are 
when $\calF$ is a pre-Hilbert space or when $\calG$ is a space of bounded functions with the sup-norm,
see Math\'e~\cite{M90} and Creutzig and Wojtaszczyk \cite{CW04}.
On the other hand, 
linear algorithms are \textit{not} optimal for all linear problems.
An example where \eqref{eq:linear-optimal} does not hold goes back to Kashin, Garnaev, and Gluskin \cite{GG84,K77}
and is important in the area of compressive sensing. It is given by the approximation problem
\[
 S\colon \ell_1^m \to \ell_2^m, \quad S(f)=f,
\]
on the unit ball $F$ of $\ell_1^m$ with $\Lambda=\Lambda^{\rm all}$.
Here,
non-linear algorithms are \emph{much} better than linear algorithms
if the dimension $m$ is large compared to the number $n$ of measurements.
Indeed, it can be shown that, in this case, the left-hand side of \eqref{eq:linear-optimal} equals
$\sqrt{(m-n)/m}$ 
for all $n<m$, 
while the right-hand side is of order 
$\sqrt{\log(m/n)/n}$.
See, e.g., \cite[VI~Theorem~2.7]{Pinkus} and \cite[Theorem~1.1]{FPRU}.
Furthermore, there exist linear problems for which every linear algorithm has an infinite error, but 
for which 
a nonlinear algorithm with finite error exists, see~\cite{WW86}.

In this paper, we consider the larger class of 
\textit{homogeneous algorithms}.
These are algorithms with the property that $A_n(\lambda f)=\lambda\, A_n(f)$ for all $f\in\calF$ and all $\lambda\in \KK$.
If the same property holds for all real $\lambda\ge 0$, we call an algorithm \textit{positively homogeneous}. 
When linear algorithms are bad, one may hope that at least the homogeneity is reconcilable with a small error, 
and in fact, this is known to be the case 
for various examples 
(see Remark~\ref{rem:famous-hom}).
Here, we show that homogeneous algorithms
are optimal for \emph{all} linear problems up to a factor of at most two.

\begin{theorem}\label{thm:main}
Let 
$(S\colon\calF\to\calG, F,\Lambda)$
be a linear problem and let $\calG$ be complete. For $n\in\N$, we let $\mathcal{A}_n^*$ denote the class of all homogeneous and non-adaptive algorithms of the form \eqref{eq:alg_general}, and, as above, let $\mathcal{A}_n$ denote the more general class of all algorithms of the form \eqref{eq:alg_general}. Then
\begin{equation}\label{eq:thm1}
 \inf_{A_n \in \mathcal{A}_n^*}\err(A_n) \,\le\, 2\,\inf_{A_n \in \mathcal{A}_n} \err(A_n).
\end{equation}
\end{theorem}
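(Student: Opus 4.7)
The plan is to convert an arbitrary (possibly adaptive) algorithm $A_n\in\calA_n$ with error $e:=\err(A_n)$ into a non-adaptive homogeneous algorithm $\tilde A_n\in\calA_n^*$ with error at most $2e$. The factor $2$ will emerge from a single centering/balance argument that yields a diameter bound for $S$ on fibers of a canonical non-adaptive linear information map extracted from $A_n$; the homogeneous recovery is then assembled from a $\KK$-equivariant section of that information.

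Step 1 (Extract linear non-adaptive information). Let $L_i^{(0)}\in\Lambda$ denote the linear functional that $A_n$ selects as its $i$-th measurement when all previous outputs are~$0$, and set
\[
 N^{0}(f)\,:=\,\bigl(L_1(f),L_2^{(0)}(f),\dots,L_n^{(0)}(f)\bigr).
\]
For any $h\in F$ with $N^{0}(h)=0$, the actual adaptive run of $A_n$ on $h$ stays on the zero trajectory, so $A_n(h)=A_n(0)=:c$. Combining $\|S(h)-c\|_{\calG}\le e$ with the same inequality for $-h\in F$ (using that $F$ is balanced) gives $\|S(h)\|_{\calG}\le e$. Hence, writing $F_y:=F\cap(N^{0})^{-1}(y)$, for any $h_1,h_2\in F_y$ the midpoint $(h_1-h_2)/2$ lies in $F\cap\ker N^{0}$ by convexity and balance, yielding the key diameter bound
\[
 \diam\bigl(S(F_y)\bigr)\,\le\,2e.
\]

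Step 2 (Build the homogeneous recovery). Let $V:=N^{0}(\calF)$ and let $\mu$ be the Minkowski functional of $N^{0}(F)$ in $V$. Partition $V\setminus\{0\}$ into its orbits under scalar multiplication by $\KK^\times$. For each orbit $\KK^\times\omega$ not lying in a recession direction of $N^{0}(F)$, pick a representative $\omega$ with $\mu(\omega)=1$ and any $f_0(\omega)\in F_\omega$; for each recession orbit pick $\omega$ together with a lift $f_0(\omega)\in\mathrm{rec}(F)\cap F_\omega$. Define
\[
 \tilde{\varphi}(\lambda\omega)\,:=\,\lambda\,S(f_0(\omega))\qquad(\lambda\in\KK),
\]
set $\tilde{\varphi}(0):=0$, and extend $\tilde{\varphi}$ to all of $\KK^n$ by zero on any algebraic complement of $V$. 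By construction $\tilde{\varphi}$ is $\KK$-homogeneous, so $\tilde A_n:=\tilde{\varphi}\circ N^{0}\in\calA_n^*$. For $h\in F$ with $y=N^{0}(h)=\lambda\omega$: in the non-recession case $|\lambda|=\mu(y)\le 1$, so by balance $\lambda f_0(\omega)\in F$, and $N^{0}(\lambda f_0(\omega))=y$, so $\tilde{\varphi}(y)=S(\lambda f_0(\omega))\in S(F_y)$; the recession case is analogous since $f_0(\omega)\in\mathrm{rec}(F)$ ensures $\lambda f_0(\omega)\in F$ for every $\lambda\in\KK$. The diameter bound from Step 1 then gives $\|S(h)-\tilde{\varphi}(y)\|_{\calG}\le 2e$, as required. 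A more canonical alternative averages $f_0$ over the Haar measure on the group $G\subseteq\KK$ of unit scalars, defining $\tilde{\varphi}(y):=\mu(y)\int_G \lambda^{-1}S(f_0(\lambda y/\mu(y)))\,\rd\lambda$; this places $\tilde{\varphi}(y)$ in $\overline{\mathrm{conv}}(S(F_y))$ and is the point at which completeness of $\calG$ is used (for the Bochner integral).

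The main obstacle is the treatment of recession directions of $N^{0}(F)$: on an orbit with $\mu\equiv 0$ one needs $\lambda f_0(y)\in F$ for every $\lambda\in\KK$, which forces the choice $f_0(y)\in\mathrm{rec}(F)$ and hence requires a lift of the form $N^{0}(\mathrm{rec}(F))\supseteq\mathrm{rec}(N^{0}(F))$. Since $V\subseteq\KK^n$ is finite-dimensional and $N^{0}$ is linear, $\mathrm{rec}(N^{0}(F))$ is a linear subspace of $V$, and a suitable lift can be extracted by standard arguments, but the verification must be carried out carefully to respect the abstract framework of linear problems admitted in the paper. The remaining equivariant choice on orbit representatives is a routine application of the axiom of choice.
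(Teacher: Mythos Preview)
Your Step~1 is correct and is precisely the classical extraction of non-adaptive information from the zero trajectory (the paper's Lemma~1). Step~2 has the right architecture, but it contains two genuine gaps. In the non-recession case you pick a representative $\omega$ with $\mu(\omega)=1$ and then a lift $f_0(\omega)\in F_\omega=F\cap(N^{0})^{-1}(\omega)$; this presupposes $\omega\in N^{0}(F)$, which fails whenever $N^{0}(F)$ does not contain its Minkowski boundary (e.g.\ when $F$ is an open ball), so $F_\omega$ may well be empty. The paper repairs this with a slack parameter $\delta>0$: one only asks for an \emph{approximate} spline $s(y)\in(N^{0})^{-1}(y)$ with $\Vert s(y)\Vert_F<(1+\delta)K(y)$, where $K(y):=\inf\{\Vert f\Vert_F:N^{0}f=y\}$, obtains error at most $(1+\delta)\diam(N^{0})$, and then lets $\delta\downarrow 0$.

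The more serious gap is the recession case, which cannot in general be handled by lifting into $\operatorname{rec}(F)$. The inclusion $\operatorname{rec}(N^{0}(F))\subseteq N^{0}(\operatorname{rec}(F))$ you rely on can fail outright: take $\calF$ an infinite-dimensional normed space, $F$ its unit ball (so $\operatorname{rec}(F)=\{0\}$), and $N^{0}\in\Lambda$ a discontinuous linear functional---the paper only requires the members of $\Lambda$ to be linear, not continuous. Then $N^{0}(F)=\KK$, so every $y\ne 0$ is a recession direction, yet $\operatorname{rec}(F)\cap F_y=\emptyset$ for all such $y$; your ``standard arguments'' cannot produce the lift because it does not exist. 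The paper's remedy is different and is where completeness of $\calG$ is actually used: for $y$ with $K(y)=0$ one picks any sequence $f_k\in(N^{0})^{-1}(y)$ with $\Vert f_k\Vert_F\to 0$, shows that $(Sf_k)$ is Cauchy in $\calG$ (this uses $\diam(N^{0})<\infty$), and defines $\varphi(y):=\lim_k Sf_k$. Your Haar-average alternative is only defined for $\mu(y)>0$ and so does not touch this case; the role you assign to completeness is not the one it plays in the proof.
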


\begin{remark}
The infimum on the right-hand side of \eqref{eq:thm1}
is referred to as the \textit{$n$th minimal error} (with respect to all admissible algorithms, where 
the class of admissible algorithms depends on the precise problem specifications). 
\end{remark}

\smallskip

\begin{remark}
In fact, we are going to show that homogeneous recovery maps are optimal for every fixed non-adaptive information mapping $N$,
see Proposition~\ref{prop:main}.
Theorem~\ref{thm:main} is then implied by the known optimality of non-adaptive information mappings, see \cite{GM80,TW80}; 
we also refer to \cite{N96} for a discussion on the power of adaption.
\end{remark}

\begin{remark}
Assuming additional structure,
there is a result by Bartle and Graves \cite{BG}
which implies the existence of a positively homogeneous and continuous approximate spline mapping for every continuous and linear information mapping $N$, see \cite[Theorem~3.3]{DPW}.
The notion of approximate splines is explained in Remark~\ref{rem:spline}.
Using this result, it is easy to obtain a (less general) version of Theorem~\ref{thm:main}.
Here, we take a more general approach,
which is also easier since we disregard continuity.
\end{remark}

\begin{remark}
 We mentioned above that linear algorithms are, in general, not optimal for linear problems, but they are optimal if the target space $\calG$ is the space $B(X)$ of bounded functions on a compact Hausdorff space $X$ (see \cite{CW04}).
It is shown in \cite{P86} that for any normed space $\calG$ there exists a compact Hausdorff space $X$ and a subspace $\widetilde \calG$ of $B(X)$ such that $\calG$ and $\widetilde \calG$ are isometrically isomorphic. Thus, any linear problem $S\colon \calF \to \calG$ may be interpreted as a linear problem $\widetilde S\colon \calF \to \widetilde\calG$, which implies that the $n$th minimal error can be achieved with a linear algorithm $\widetilde A_n\colon \calF \to B(X)$. Thus, in a certain sense, linear algorithms are always optimal for linear problems if we sufficiently blow up the target space. However, since $\widetilde A_n$ maps to $B(X)$ and not to the subspace $\widetilde \calG$, there is in general no meaningful interpretation of the approximation $A_n'(f)$ in terms of the original space of solutions $\calG$.
\end{remark}

\smallskip

There are two reasons why we are interested in results of this type.
First, in the prominent case that
one considers a linear problem where
$F$ is the unit ball of $\calF$, it is easily seen that any (positively) homogeneous algorithm $A_n$ satisfies
\[
 \Vert S(f) - A_n(f) \Vert_\calG \,\le\, \err(A_n) \cdot \Vert f\Vert_\calF
 \quad\text{for all } f\in\calF.
\]
In this sense, a homogeneous algorithm with small error 
is not only good on the unit ball of $\calF$
but instead on the full space.
This is usually not the case for non-homogeneous algorithms.
In particular, as stated in Corollary~\ref{cor:error-notions}, the $n$th minimal error and the complexity of
a linear problem do not change (up to a factor of two) 
if we switch from the error criterion $\err(A_n)$ in \eqref{def:wc}
to the error criterion
\[
 \widetilde\err(A_n) \,:=\, \sup_{f\in\calF\setminus\{0\}} \frac{\Vert S(f) - A_n(f) \Vert_\calG}{\Vert f\Vert_\calF}.
\]
The second reason why we are interested in the results mentioned above is that homogeneous algorithms are better suited for problems that are defined on cones as considered, e.g., in \cite{DHKM20,KNR19},
as we explain in the following subsection.

\subsection{Solvability of problems with inputs lying in a cone}

By a \textit{cone} we generally understand any
subset $\calC$ of a $\KK$-vector space $\calF$ which satisfies that $\lambda f\in\calC$ for all $f\in\calC$ and $\lambda > 0$. Problems on cones are usually not solvable with algorithms that use a fixed number of measurements, see Proposition~\ref{lem:not-solvable}. 
In other words, they are not \textit{uniformly solvable}.
However, they are often solvable with algorithms that use an adaptive number of measurements, 
see \cite{DHKM20,KNR19}. 
We call such problems
\textit{weakly solvable}.
In Theorem~\ref{thm:solvable}, we provide a statement on the solvability of 
problems on a certain family of cones, and this insight
is based on the optimality of homogeneous algorithms for linear problems. We state a short version of this theorem here, in order to give the reader an idea of what we will prove below, but avoiding technical notation. For the full version of the theorem,
the precise definitions of solvability, as well as the
proof, we refer to Section~\ref{sec:cones}.
\begin{theorem}[Short version]
   Let $(S,B_\calF,\Lambda)$ and $(T,B_\calF,\Lambda)$ be uniformly solvable linear problems, where $B_\calF$ is the unit ball of $\calF$, and let $t>0$. Then the problem $(S,\calC_t,\Lambda)$ is weakly solvable, where
\[
 \calC_t \,:=\, \left\{ f\in \calF \colon \Vert f \Vert_\calF \le t\,\Vert Tf \Vert_\calH \right\}.
\]
More precisely, we have the cost bound \eqref{eq:cost_bound_th2}.
\end{theorem}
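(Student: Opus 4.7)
The plan is to construct a two-stage adaptive algorithm for inputs $f \in \calC_t$ that achieves any prescribed target error $\varepsilon > 0$ using finitely many measurements. The two crucial ingredients are Theorem~\ref{thm:main}, which yields homogeneous and near-optimal algorithms whose error bounds extend by homogeneity from the unit ball $B_\calF$ to all of $\calF$, and the uniform solvability of the two auxiliary problems, which guarantees that the $n$th minimal errors $e_n^S$ of $(S,B_\calF,\Lambda)$ and $e_n^T$ of $(T,B_\calF,\Lambda)$ tend to zero as $n \to \infty$.

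First, I fix homogeneous and non-adaptive algorithms $A_n$ for $S$ and $B_n$ for $T$ provided by Theorem~\ref{thm:main}, with $\err(A_n) \le 2 e_n^S$ and $\err(B_n) \le 2 e_n^T$ on the unit ball. Homogeneity of these maps, combined with linearity of $S$ and $T$, upgrades the unit-ball bounds to the pointwise estimates
\begin{equation*}
 \|S(f) - A_n(f)\|_\calG \,\le\, 2 e_n^S\, \|f\|_\calF, \qquad \|T(f) - B_n(f)\|_\calH \,\le\, 2 e_n^T\, \|f\|_\calF,
\end{equation*}
valid for every $f \in \calF$. This is the decisive consequence of homogeneity: a non-homogeneous algorithm that is near-optimal on $B_\calF$ would give no such norm-scaled error control away from the unit ball, but the cone $\calC_t$ is unbounded, so such control is exactly what is needed.

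Second, I exploit $B_n$ to obtain an a posteriori upper bound on the unknown norm $\|f\|_\calF$. The reverse triangle inequality together with the pointwise bound on $B_n$ and the defining cone condition $\|f\|_\calF \le t\|Tf\|_\calH$ yields, whenever $2 e_n^T t < 1$,
\begin{equation*}
 \|f\|_\calF \,\le\, t\, \|Tf\|_\calH \,\le\, \frac{t\,\|B_n(f)\|_\calH}{1 - 2 e_n^T t}.
\end{equation*}
I choose $n_0$ small enough that $2 e_{n_0}^T t \le 1/2$, which is possible by uniform solvability of $T$, and evaluate $B_{n_0}(f)$; this costs $n_0$ measurements and returns a computable upper bound $M(f)$ on $\|f\|_\calF$ depending only on the data already gathered.

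Third, given the target error $\varepsilon$ and the number $M(f)$ obtained in stage two, I select $n=n(\varepsilon,f)$ minimal with $2 e_n^S\, M(f) \le \varepsilon$, which is finite since $e_n^S \to 0$, and I output $A_n(f)$. The pointwise bound then yields $\|S(f) - A_n(f)\|_\calG \le 2 e_n^S \|f\|_\calF \le \varepsilon$, and the total cost is $n_0 + n(\varepsilon,f)$, chosen adaptively on the basis of the observed $B_{n_0}(f)$. The main obstacles I expect are, first, packaging this two-stage procedure as a formally valid adaptive algorithm in the sense of \eqref{eq:alg_general} (reusing the $n_0$ initial measurements and letting the later information depend on them), and second, matching the quantitative bound $n_0 + n(\varepsilon,f)$ to the precise form of \eqref{eq:cost_bound_th2}, which will presumably express the cost in terms of the complexities of the two unit-ball problems at error levels derived from $\varepsilon$, $t$, and $M(f)$.
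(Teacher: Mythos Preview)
Your proposal is correct and follows essentially the same two-stage strategy as the paper: first apply a homogeneous algorithm for $T$ (with error below a fixed threshold depending on $t$) to obtain a computable upper bound on $\Vert f\Vert_\calF$ via the cone inequality, then adaptively choose the cost of a homogeneous algorithm for $S$ so that the scaled error on $\calF$ meets the target $\varepsilon$. The paper's proof differs only in bookkeeping: it fixes the first-stage cost as $m=\comp(1/(5t),T,B_\calF,\Lambda)$, writes the resulting bound as $\Vert f\Vert_\calF \le 2t\,\Vert Q_m f\Vert_\calH$, and then bounds $\Vert Q_m f\Vert_\calH \le \tfrac{3}{2}\Vert Tf\Vert_\calH$ to express the final cost in terms of $\Vert Tf\Vert_\calH$, yielding the explicit constants in \eqref{eq:cost_bound_th2}.
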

To illustrate Theorem \ref{thm:solvable} with an example, which is discussed among other examples in Section~\ref{sec:cones}, 
consider the approximation problem $S\colon W_2^1([0,1]) \to L_2(0,1)$ with standard information on the input set
\[
 \calC_t \,:=\, \Big\{ f \in W_2^1([0,1]) 
 \colon \Vert f' \Vert_2 \le t\, \Vert f \Vert_2 \Big\}\, .
\]
Here, there is no algorithm $A_n \in \mathcal A_n$ that uses a fixed number $n$ of function values and has a finite error $\err(A_n) < \infty$. But 
a prescribed error $\varepsilon>0$ can still be guaranteed with a varying finite number of function evaluations. The algorithm, which does not require knowledge of $\Vert f \Vert_2$ or $\Vert f' \Vert_2$, needs at most $\mathcal{O}(t\varepsilon^{-1} \Vert f \Vert_2)$ function values to find an $\varepsilon$-approximation of any $f\in\calC_t$.

\subsection{Structure of the paper and notation}

The rest of the paper is structured as follows. Section~\ref{sec:linear_problems} contains the proof of our first main result, Theorem~\ref{thm:main}. A key step in showing the theorem is Proposition~\ref{prop:main}, which is also proved there. Section~\ref{sec:hom_problems} deals with several follow-up considerations to the previous sections, and with algorithms applied to problems which are not linear, but more generally homogeneous. We also present two open problems. Finally, Section~\ref{sec:cones} considers problems with inputs lying in cones. There, we first outline theoretical foundations, including the proof of Theorem~\ref{thm:solvable}, and then apply the theory to several examples. We conclude the paper
with an example that is left for future research.

\begin{notation}
In the following, depending on the situation, we might sometimes write $Sf$ instead of $S(f)$, $Nf$ instead of $N(f)$, and similarly for other mappings, when it eases readability. 
A mapping $\psi:\calC \to \mathcal{K}$ between two subsets $\calC$ and $\mathcal{K}$ of $\KK$-vector spaces is called (positively) homogeneous if $\psi(\lambda f) = \lambda \psi (f)$ for all $f\in\calC$ and $\lambda\in \KK$ (resp.\ $\lambda \ge 0$) such that $\lambda f \in \calC$.
\end{notation}

\section{Homogeneous algorithms and linear problems}
\label{sec:linear_problems}

The following proposition is at the heart of our findings.
It is a statement on homogeneous solution operators $S\colon \calF \to \calG$ and 
homogeneous information mappings $N\colon \calF \to \KK^n$.
In short, it implies that there always exists a \emph{homogeneous} recovery map $\varphi\colon \KK^n \to \calG$ such that the corresponding algorithm $A_n=\varphi\circ N$ is optimal up to a factor of at most 2.

For a precise statement,
we introduce the diameter of an (information) mapping 
$N \colon \calF \to \KK^n$,
\[
 \diam(N) \,=\,
 \diam(N,S,F) \,:=\,
 \sup \Big\{ \Vert S f - S g \Vert_\calG  \colon f,g\in F,\, Nf=Ng \Big\},
\]
which measures the maximal uncertainty in the solution under the a priori knowledge $f\in F$ if the information about $f$ is given by $N$.
The diameter of information relates to the minimal error that can be achieved with this information by
\begin{equation}\label{eq:diam-err}
 \inf_{\varphi\colon \KK^n \to \calG}\, \err(\varphi \circ N) \,\le\,
 \diam(N)
 \,\le\, 2 \inf_{\varphi\colon \KK^n \to \calG}\, \err(\varphi \circ N).
\end{equation}
The infimum appearing in \eqref{eq:diam-err} is commonly referred to as the radius of information and denoted by $\mathrm{rad} (N)$. We refer to \cite[Chapter 4]{NW08} for a proof of \eqref{eq:diam-err} and for background on these concepts.

\begin{proposition}\label{prop:main}
Let $F$ be a balanced, convex and non-empty subset of a $\KK$-vector space $\calF$, let $\calG$ be a complete normed space,
and let $S\colon \calF \to \calG$ be positively homogeneous.
Then, for every positively homogeneous mapping $N \colon \calF \to \KK^n$ 
and any $\delta>0$, 
there is a positively homogeneous mapping $\varphi^*\colon \KK^n \to \calG$
such that
\begin{equation}\label{eq:main-bound}
 \err(\varphi^* \circ N) \,\le\, (1+\delta)\, \diam(N).
\end{equation}
In particular,
\begin{equation}\label{eq:main-cor}
 \err(\varphi^* \circ N) \,\le\, (2+2\delta)\, \inf_{\varphi\colon \KK^n \to \calG}\, \err(\varphi \circ N).
\end{equation}
Moreover, if $S$ and $N$ are homogeneous,
we can also choose $\varphi^*$ to be homogeneous.
\end{proposition}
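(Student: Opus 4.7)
We may assume $\diam(N)<\infty$, otherwise \eqref{eq:main-bound} is vacuous. The plan is to construct $\varphi^*$ one positive ray of $\KK^n$ at a time and then extend by positive homogeneity. After fixing a set $D\subseteq\KK^n\setminus\{0\}$ of representatives for $y\sim z\Leftrightarrow y=\lambda z$ with $\lambda>0$ and choosing a value $\varphi^*(r)\in\calG$ for each $r\in D$, the rules $\varphi^*(\lambda r):=\lambda\varphi^*(r)$ for $\lambda>0$ and $\varphi^*(0):=0$ produce a positively homogeneous map on $\KK^n$, and the error $\|Sf-\varphi^*(Nf)\|_\calG$ at any $f\in F$ depends only on the choice made on the positive ray containing $Nf$.

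The key observation is that for each ray $[r]^+=\{\mu r:\mu>0\}$ meeting $N(F)$, the auxiliary sets
\[
K_r^{(\mu)}\;:=\;\mu^{-1}\bigl\{Sf:f\in F,\;Nf=\mu r\bigr\}\subseteq\calG,\qquad\mu>0,
\]
are decreasing in $\mu$ and satisfy $\diam K_r^{(\mu)}\le\mu^{-1}\diam(N)$. Both properties follow from a single scaling: for $0<\mu\le\mu'$ and $f\in F$ with $Nf=\mu'r$, balancedness gives $(\mu/\mu')f\in F$, and positive homogeneity of $S$ and $N$ gives $N((\mu/\mu')f)=\mu r$ and $S((\mu/\mu')f)=(\mu/\mu')Sf$. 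Writing $M_r:=\sup\{\mu>0:\mu r\in N(F)\}\in(0,\infty]$, I pick $\varphi^*(r)$ as follows. If $M_r=\infty$, the diameters of $\overline{K_r^{(\mu)}}$ shrink to zero, so completeness of $\calG$ and Cantor's intersection theorem yield a single point $\varphi^*(r)\in\bigcap_{\mu\ge1}\overline{K_r^{(\mu)}}$; by the nesting, this point in fact lies in $\overline{K_r^{(\lambda)}}$ for every $\lambda>0$. If $M_r<\infty$, pick any $\mu_r\in(M_r/(1+\delta),M_r]$ with $\mu_r r\in N(F)$ (which exists by definition of the sup) and some $f_r\in F$ with $Nf_r=\mu_r r$, and set $\varphi^*(r):=\mu_r^{-1}Sf_r$. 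On rays missing $N(F)$, set $\varphi^*(r):=0$.

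To verify \eqref{eq:main-bound}, take $f\in F$ with $Nf=\lambda r\ne 0$. In the Cantor construction, $\lambda^{-1}Sf\in K_r^{(\lambda)}$ and $\varphi^*(r)\in\overline{K_r^{(\lambda)}}$, so $\|\lambda^{-1}Sf-\varphi^*(r)\|\le\lambda^{-1}\diam(N)$, i.e., $\|Sf-\lambda\varphi^*(r)\|\le\diam(N)$. In the finite-$M_r$ construction, if $\lambda\le\mu_r$ then $g:=(\lambda/\mu_r)f_r\in F$ satisfies $Ng=Nf$ and $Sg=\lambda\varphi^*(r)$, giving error at most $\diam(N)$; if $\mu_r<\lambda<M_r$ (possible only when $M_r$ is not attained) then $(\mu_r/\lambda)f\in F$ has $N$-image $\mu_r r=Nf_r$, so the same estimate rescales to $\|Sf-\lambda\varphi^*(r)\|\le(\lambda/\mu_r)\diam(N)<(1+\delta)\diam(N)$. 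Inequality \eqref{eq:main-cor} then drops out of \eqref{eq:diam-err}. For the moreover statement, when $S$ and $N$ are homogeneous one has $K_{\alpha r}^{(\mu)}=\alpha K_r^{(\mu)}$ for every $\alpha\in\KK$ with $|\alpha|=1$; making compatible choices across each $\KK^\times$-orbit (equivalently, selecting representatives for the coarser equivalence $y\sim'z\Leftrightarrow y=\alpha z$ with $\alpha\in\KK\setminus\{0\}$, and setting $f_{\alpha r}:=\alpha f_r$ in the finite case) then forces $\varphi^*(\alpha r)=\alpha\varphi^*(r)$ and produces a fully homogeneous $\varphi^*$. The main obstacle throughout is the unbounded case $M_r=\infty$: any naive pointwise assignment on the ray incurs an error growing linearly in $\lambda$, which is why completeness of $\calG$ is genuinely needed.
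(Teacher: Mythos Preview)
Your argument is correct and is essentially the paper's proof in dual packaging: your dichotomy $M_r<\infty$ versus $M_r=\infty$ is exactly the paper's split $K(Nf)>0$ versus $K(Nf)=0$ (indeed $K(r)=1/M_r$ once one introduces the Minkowski functional of $F$), with an approximate spline in the finite case and a limit in $\calG$ via completeness in the infinite case---you use Cantor's intersection theorem where the paper argues directly with Cauchy sequences. Two small patches: you omit the case $Nf=0$, which is harmless since $0\in F$, $N(0)=0$, $S(0)=0$ give $\|Sf-\varphi^*(0)\|=\|Sf-S(0)\|\le\diam(N)$; and in the finite-$M_r$ subcase your strict inequality $\lambda<M_r$ should read $\lambda\le M_r$ (the estimate $\lambda/\mu_r\le M_r/\mu_r<1+\delta$ is unaffected).
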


\medskip

\begin{remark}\label{rem:spline}
    The main step in the proof will be to define a (positively) homogeneous approximate spline mapping. Here, a mapping $s\colon N(\calF) \to \calF$ is called an \textit{approximate spline}, if it satisfies 
    \begin{itemize}
        \item $N(s(y))=y$ for all $y\in N(\calF)$ and
        \item 
    $\Vert s(Nf)\Vert_F \le (1+\delta) \Vert f\Vert_F$ for all $f\in \calF$,
    \end{itemize} 
    where $\delta>0$ is a given parameter and $\Vert\cdot\Vert_F$ is the Minkowski semi-norm induced by $F$, see~\eqref{eq:Minkowski}.
    It is called an \textit{exact spline} in the case $\delta=0$. 
    Exact splines are studied in \cite[Chapter~4]{TW80}, for example.
    In certain situations, 
    see \cite[Theorem~3.4]{DPW},
    there exists a unique exact spline,
    and then the spline is necessarily (positively) homogeneous.
    But in general, an exact spline does not exist. 
    Here, we show that there always exists a (positively) homogeneous approximate spline. The desired recovery map is then given by $\varphi^* = S \circ s$.\\
    The existence of a positively homogeneous approximate spline was already noted in \cite[Theorem~3.3]{DPW},
    a result that is attributed to Bartle and Graves \cite{BG}.
    This result makes extra assumptions on $\calF$, $N$ and $F$ (e.g., that $N$ is linear and continuous and that $\Vert \cdot\Vert_F$ is a norm), but in return the approximate spline is also continuous.
\end{remark}

\begin{proof}[Proof of Proposition~\ref{prop:main}]
We first observe that the positively homogeneous case implies the homogeneous case.
Indeed, by the first part of the statement, we can find a positively homogeneous mapping $\widetilde\varphi$ with the desired error bound \eqref{eq:main-bound}. We let $M$ be the set of vectors in $\KK^n$ which are either zero or whose first non-zero coordinate is real and positive. Then each $y\in\KK^n\setminus\{0\}$ can be uniquely expressed as $y=\lambda(y) \cdot y_+$ with some $y_+\in M$ and some $\lambda(y)\in\KK$ with $|\lambda(y)|=1$.
We replace the mapping $\widetilde \varphi$ by the mapping $\varphi^*\colon \KK^n \to \calG$ with $\varphi^*(y) := \lambda(y) \cdot \widetilde\varphi(y_+)$. It is easily verified that the new mapping $\varphi^*$ is homogeneous and that the error bound \eqref{eq:main-bound} is preserved under this replacement whenever $S$ and $N$ are homogeneous.

Moreover, Equation \eqref{eq:main-cor} follows from Equation \eqref{eq:main-bound} via \eqref{eq:diam-err}.
Thus our task is to prove the first statement on the existence of a positively homogeneous mapping $\varphi^*$ that satisfies \eqref{eq:main-bound}.

Since $F$ is convex and balanced, it holds that
${\rm span}(F) = \{\lambda f \colon f\in F, \lambda \ge 0\}$. Without loss of generality, we assume that $\calF = {\rm span}(F)$
since the quantities $\err(\varphi^* \circ N)$ and $\diam(N)$ depend on $F$ but not $\calF$.
Note furthermore that the result is trivially correct if $\diam(N) = \infty$. Hence, we may assume for the rest of the proof that $\diam(N) < \infty$.

We define a semi-norm on $\cal F$ by the Minkowski functional
\begin{equation}\label{eq:Minkowski}
 \Vert f \Vert_F \,:=\, \inf \left\{ r>0 \colon f/r \in F \right\}.
\end{equation}
Then
\begin{equation}\label{eq:F_norm_one}
 \{ f \in \calF \colon \Vert f \Vert_F < 1 \}
 \,\subseteq\, F
 \,\subseteq\, \{ f \in \calF \colon \Vert f \Vert_F \le 1 \}.
\end{equation}
Moreover, we define
\[
 K\colon N(\calF) \to [0,\infty), \quad K(y) := \inf_{f \in N^{-1}(y)} \Vert f \Vert_{F} .
\]
This mapping is positively homogeneous;
we have $K(\lambda y) = \lambda K(y)$ for $y\in\KK^n$ and $\lambda \ge 0$.
\medskip

We now define a mapping $\varphi^* \colon N(\calF) \to \calG$.
For this, we split $\calF$ into two disjoint cones, namely 
\[
\calF_0 := \{ f\in \calF \colon K(Nf)=0 \}
\quad\text{and}\quad 
\calF_+ := \{ f\in \calF \colon K(Nf)>0 \}.
\]
That is, $f\in\calF$ belongs to $\calF_0$ if and only if the norm of a function with the same information as $f$ can be
arbitrarily close to zero, and belongs to $\calF_+$ otherwise.
The images of these cones are given by
\[
 N(\calF_0) = \{ y\in N(\calF) \colon K(y)=0 \} 
 \quad\text{and}\quad  
 N(\calF_+) = \{ y\in N(\calF) \colon K(y)>0 \}
\]
and provide a partition of $N(\calF)$.
We will define two positively homogeneous mappings $\varphi_0 \colon N(\calF_0) \to \calG$
and $\varphi_+ \colon N(\calF_+) \to \calG$ such that
\begin{equation}\label{eq1}
 \Vert Sf - \varphi_0(N f) \Vert_{\calG} \,\le\, \diam(N)
 \quad \text{for all } f \in F \cap \calF_0
\end{equation}
and 
\begin{equation}\label{eq2}
 \Vert Sf - \varphi_+(N f) \Vert_{\calG} \,\le\, (1+\delta)\diam(N)
 \quad \text{for all } f \in F \cap \calF_+.
\end{equation}
Then the mapping
\[
 \varphi^* \colon \KK^n \to \calG, \quad \varphi^*(y) = \begin{cases}
       \varphi_0(y) & \mbox{if $y\in N(\calF_0)$,}\\
       \varphi_+(y) & \mbox{if $y\in N(\calF_+)$,}\\
       0 & \mbox{if $y\not\in N(\calF)$,}
      \end{cases}
\]
is positively homogeneous and satisfies \eqref{eq:main-bound}.
Thus, it only remains to show \eqref{eq1} and \eqref{eq2} to complete the proof.

We start with the definition of $\varphi_+$ and the proof of \eqref{eq2}.
Let $y\in N(\calF_+)$.
If $K(y)= 1$, we choose $s(y)$ as an arbitrary element from the pre-image 
$N^{-1}(y)$ that satisfies 
\begin{equation}\label{almost-spline_alt}
 \Vert s(y) \Vert_F \,<\, (1+\delta) K(y).
\end{equation}
For arbitrary $y\in N(\calF_+)$, we have $K(y/K(y))=1$ and put $s(y)=K(y) s(y/K(y))$.
Then the mapping $s\colon N(\calF_+) \to \calF$ is positively homogeneous,
which also implies that \eqref{almost-spline_alt} holds for all $y\in N(\calF_+)$.
Moreover, for all $y\in N(\calF_+)$,
the element $s(y)\in\calF$ satisfies $N(s(y))=y$ 
(i.e., it is an approximate spline).
We put $\varphi_+(y) = S(s(y))$.

The mapping $\varphi_+\colon N(\calF_+) \to \calG$ is positively homogeneous
as a composition of positively homogeneous mappings.
To bound the error of $\varphi_+ \circ N$, 
we note that for all $f\in \calF_+$ with norm $\norm{\cdot}_F$ at most $1/(1+\delta)$,
which implies $f\in F$,
we have $K(Nf)\le 1/(1+\delta)$ and thus by 
\eqref{almost-spline_alt} that $s(Nf)$ is contained in $F$.
Moreover, $f$ and $s(Nf)$ yield the same information.
Therefore,
\[
 \Vert Sf - \varphi_+(Nf) \Vert_\calG \,\le\, \diam(N)
 \quad \text{for all } f \in \calF_+ \text{ with } \Vert f \Vert_{F} \le 1/(1+\delta).
\]
The bound \eqref{eq2} is now obtained by recalling \eqref{eq:F_norm_one} and scaling due to the positive homogeneity of $S$ and $\varphi_+ \circ N$.

We continue with the definition of $\varphi_0$ and the proof of \eqref{eq1}.
To this end, let $y\in N(\calF_0)$ and hence $K(y)=0$.
Then there is a sequence $(f_k) \subseteq N^{-1}(y)$ such that $\Vert f_k \Vert_F \to 0$.
This implies that $(Sf_k)$ is a Cauchy sequence in $\calG$.
Indeed, assume to the contrary that it is no Cauchy sequence.
Then there is some $c>0$ such that for all $n_0$ there are $k,m>n_0$ with $\Vert Sf_k - Sf_m \Vert_\calG \ge c$.
Given any $R>0$, we can choose $n_0$ large enough such that $\Vert f_k\Vert_F$ and $\Vert f_m\Vert_F$ are smaller than $c/R$. Then $h_k = R f_k /c$ and $h_m = R f_m/c$ are in $F$ and satisfy $Nh_k = N h_m$ and $\Vert Sh_k - Sh_m \Vert_\calG \ge R$. This yields $\diam(N)=\infty$, which would be a contradiction.\\
As $\cal G$ is complete, the sequence $(Sf_k)$ has a limit $g$ and we define $\varphi_0(y) := g$.
This definition is independent of the choice of the sequence $(f_k)$.
Indeed, let $(f_k^*)$ be another sequence in $N^{-1}(y)$ with $\Vert f_k^* \Vert_F \to 0$.
Assume to the contrary that $(Sf_k^*)$ has a different limit than $(Sf_k)$, i.e.,
there is some $c>0$ with $\Vert Sf_k - Sf_k^* \Vert_\calG \ge c$ for infinitely many $k$.
For any $R>0$, choosing $k$ large enough, we thus have $h_k := Rf_k/c \in F$ and $h_k^* := Rf_k^*/c \in F$
with $N(h_k)=N(h_k^*)$ and $\Vert Sf_k - Sf_k^* \Vert_\calG \ge R$,
again leading to the contradiction $\diam(N)=\infty$.

It is not hard to check that the mapping $\varphi_0$ is indeed positively homogeneous:
We have $\varphi_0(0)=0$ (since the value of $\varphi_0(0)$ is independent of the concrete choice of the sequence $(f_k) \subseteq N^{-1}(0)$ from above) and for $\lambda>0$ and $y\in N(\calF_0) \setminus \{0\}$,
we know that $\varphi_0(\lambda y)$ may be expressed as the limit of a sequence $(S(\lambda f_k))$,
where $(f_k) \subseteq N^{-1}(y)$ such that $\Vert f_k \Vert_F \to 0$,
and hence $Sf_k \to \varphi_0(y)$; thus $\varphi_0(\lambda y) = \lambda \varphi_0(y)$.
Moreover, for every $y\in N(\calF_0)$, 
we have that $\varphi_0(y)$ is contained in the closure $\overline{S(N^{-1}(y)\cap F)}$.
Thus, for all $f\in N^{-1}(y)\cap F$, we have
\[
 \Vert Sf - \varphi_0(Nf) \Vert_\calG \,\le\, \diam(\overline{S(N^{-1}(y)\cap F)})
 \,=\, \diam(S(N^{-1}(y)\cap F)) \,\le\, \diam(N).
\]
Here, the diameter of a set in $\calG$ is defined in the usual way.
This gives \eqref{eq1}.
\end{proof}

We now turn to the setting of \emph{linear problems}. Here, a classical result from \cite{GM80,TW80} states that the minimal diameter of information can be obtained with non-adaptive information mappings, see, e.g., \cite[Chapter~4.2]{NW08}.
We note that this result is usually stated in the case $\KK=\RR$, but the proof is exactly the same in the case $\KK=\CC$.
Since the proof is short and elegant anyway, let us repeat it.

\begin{lemma}[see \cite{GM80,TW80}]\label{lem:non-ada}
    Let $(S\colon\calF\to\calG, F,\Lambda)$
be a linear problem and $n\in\NN$. For any $\delta>0$, there exists a non-adaptive information mapping $N\colon \calF \to \KK^n$ such that
    \[
     \diam(N) \,\le\, (2+\delta)\,\inf_{A_n \in \mathcal{A}_n} \err(A_n).
    \]
\end{lemma}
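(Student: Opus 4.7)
The plan is to prove this by a direct construction: starting from an adaptive algorithm $A_n^{\rm ad}\in\mathcal{A}_n$ whose error is within a factor $(1+\delta/2)$ of the infimum, I would extract a non-adaptive information mapping $N$ by "freezing" the adaptive choices along the trajectory of the zero input. Concretely, if the adaptive information mapping selects $L_1\in\Lambda$ first, then $L_2=L_2^{(L_1 f)}\in\Lambda$, and so on, I define fixed functionals
\[
 \widetilde L_1 := L_1, \quad \widetilde L_2 := L_2^{(0)}, \quad \widetilde L_3 := L_3^{(0,0)}, \quad \hdots, \quad \widetilde L_n := L_n^{(0,\hdots,0)},
\]
and put $N(f)=(\widetilde L_1(f),\hdots,\widetilde L_n(f))$. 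Each $\widetilde L_i$ lies in $\Lambda$, so $N$ is a linear non-adaptive information mapping.

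The crucial observation is that whenever $f\in\calF$ satisfies $N(f)=0$, the adaptive algorithm traverses exactly the same branch it would on the input $0$: indeed, $\widetilde L_1(f)=0$ forces the second adaptively chosen functional to be $L_2^{(0)}=\widetilde L_2$, which then evaluates to $0$ on $f$, and so on inductively. Hence $A_n^{\rm ad}(f)=A_n^{\rm ad}(0)$ for every such $f$. Since $F$ is balanced, $-f\in F$ whenever $f\in F$, and we also get $A_n^{\rm ad}(-f)=A_n^{\rm ad}(0)$. Applying the triangle inequality to the pair of error bounds $\|S f - A_n^{\rm ad}(0)\|_\calG\le \err(A_n^{\rm ad})$ and $\|-S f - A_n^{\rm ad}(0)\|_\calG\le \err(A_n^{\rm ad})$ yields $\|S f\|_\calG\le \err(A_n^{\rm ad})$ for all $f\in F$ with $N(f)=0$.

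Finally, to bound $\diam(N)$, I take any $f,g\in F$ with $N(f)=N(g)$ and set $h:=(f-g)/2$. The convex–balanced assumption on $F$ gives $h\in F$, and linearity of $N$ gives $N(h)=0$. The previous step then yields $\|S h\|_\calG\le\err(A_n^{\rm ad})$, so
\[
 \|S f-S g\|_\calG \,=\, 2\|S h\|_\calG \,\le\, 2\,\err(A_n^{\rm ad})\,\le\,(2+\delta)\inf_{A_n\in\mathcal{A}_n}\err(A_n),
\]
which is the desired bound on $\diam(N)$.

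The only delicate point is ensuring that the freezing construction is well-defined and that the conclusion $A_n^{\rm ad}(f)=A_n^{\rm ad}(0)$ is justified inductively: one must be careful that the adaptive choice of $L_i$ depends only on the previously observed values $L_1(f),\hdots,L_{i-1}(f)$ and on nothing else, so that two inputs producing identical partial information also produce the same future choices. Once this is accepted (which is exactly the formal definition of adaptive information in the model), everything else is routine use of linearity and the balanced–convex structure of $F$.
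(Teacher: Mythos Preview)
Your proof is correct and follows essentially the same approach as the paper: freeze the adaptive measurements along the zero trajectory to obtain a non-adaptive $N$, use that any $h\in F$ with $N(h)=0$ is indistinguishable from $-h$ (equivalently from $0$) under the adaptive algorithm, and then pass from arbitrary $f,g\in F$ with $N(f)=N(g)$ to $h=(f-g)/2$ via the convex--balanced hypothesis. The only cosmetic difference is that the paper compares $A_n(h)$ with $A_n(-h)$ directly, whereas you compare each of $A_n^{\rm ad}(\pm f)$ with $A_n^{\rm ad}(0)$ and then use the triangle inequality; the content is identical.
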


\begin{proof}
    Let $A_n \in \mathcal{A}_n$ be an arbitrary algorithm and $N$ be the corresponding (possibly adaptive) information mapping. Let $L_1,\hdots,L_n \in \Lambda$ be the measurement maps that the algorithm chooses for the input $f=0$
    and consider the non-adaptive information mapping $N^{\rm non}:=(L_1,\hdots,L_n)$. Let further $f,g\in F$ with $N^{\rm non}(f)=N^{\rm non}(g)$. Since $F$ is convex and balanced, the function $h = \frac{f-g}{2}$ is contained in $F$, and since $N^{\rm non}$ is linear, we have $N^{\rm non}(h)=0$. But this means that the adaptive information mapping $N$ recursively chooses the same measurement maps as for the zero input, and thus also $N(h)=0$. The same statements hold for the function $-h$, and thus $A_n$ cannot distinguish between $h$ and $-h$; we have $A_n(h)=A_n(-h)$. This gives
    \[
     \err(A_n) \,\ge\, \frac12\, \Vert S(h) - S(-h) \Vert_\calG 
     \,=\, \frac12\, \Vert S(f) - S(g) \Vert_\calG.
    \]
    Taking the supremum over all such $f$ and $g$, we obtain $2\,\err(A_n) \ge \diam(N^{\rm non})$.
\end{proof}

For a linear problem, any non-adaptive information mapping is also homogeneous.
Hence, Proposition~\ref{prop:main} results in Theorem~\ref{thm:main}, the statement of which is repeated here for completeness.

\setcounter{theorem}{0}

\begin{theorem}
Let 
$(S\colon\calF\to\calG, F,\Lambda)$
be a linear problem and let $\calG$ be complete. For $n\in\N$, we let $\mathcal{A}_n^*$ denote the class of all homogeneous and non-adaptive algorithms of the form \eqref{eq:alg_general}, and, as above, let $\mathcal{A}_n$ denote the more general class of all algorithms of the form \eqref{eq:alg_general}. Then
\begin{equation*}
 \inf_{A_n \in \mathcal{A}_n^*}\err(A_n) \,\le\, 2\,\inf_{A_n \in \mathcal{A}_n} \err(A_n).
\end{equation*}
\end{theorem}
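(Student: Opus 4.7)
The plan is to chain together the two preceding results in this section, namely Proposition~\ref{prop:main} and Lemma~\ref{lem:non-ada}. Fix $\varepsilon>0$ and choose $\delta>0$ small enough that $(1+\delta)(2+\delta)\le 2+\varepsilon$; the proof will then conclude by letting $\varepsilon\to 0$.

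First, I would invoke Lemma~\ref{lem:non-ada} to obtain a non-adaptive information mapping $N\colon\calF\to\KK^n$ with
\[
 \diam(N)\,\le\,(2+\delta)\,\inf_{A_n\in\mathcal{A}_n}\err(A_n).
\]
The key observation is that $N=(L_1,\ldots,L_n)$ with each $L_i\in\Lambda$, and since $(S,F,\Lambda)$ is a linear problem, the admissible measurements $L_i$ are linear functionals. Hence $N$ is itself linear, and in particular homogeneous.

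Second, because $S$ is linear (hence homogeneous) and $N$ is homogeneous, the final clause of Proposition~\ref{prop:main} applies and yields a homogeneous recovery map $\varphi^*\colon\KK^n\to\calG$ with $\err(\varphi^*\circ N)\le (1+\delta)\diam(N)$. The composite $A_n^*:=\varphi^*\circ N$ is non-adaptive (because $N$ is non-adaptive) and homogeneous (being the composition of two homogeneous maps), so $A_n^*\in\mathcal{A}_n^*$. Combining the two bounds gives
\[
 \err(A_n^*)\,\le\,(1+\delta)(2+\delta)\,\inf_{A_n\in\mathcal{A}_n}\err(A_n)\,\le\,(2+\varepsilon)\,\inf_{A_n\in\mathcal{A}_n}\err(A_n),
\]
and sending $\varepsilon\to 0$ yields \eqref{eq:thm1}.

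There is no real obstacle beyond what has already been done in the two preceding results; the proof amounts to checking that the pieces fit together. The only subtleties are (a) that the non-adaptive information mapping produced by Lemma~\ref{lem:non-ada} is linear and therefore homogeneous, which is needed to match the hypothesis of Proposition~\ref{prop:main}, and (b) that one must use the \emph{homogeneous} (not merely positively homogeneous) version of Proposition~\ref{prop:main}, since the statement of Theorem~\ref{thm:main} is formulated in the possibly complex-linear setting.
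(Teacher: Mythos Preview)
Your proposal is correct and follows essentially the same route as the paper: apply Lemma~\ref{lem:non-ada} to get a non-adaptive (hence linear, hence homogeneous) information mapping with nearly minimal diameter, then apply Proposition~\ref{prop:main} to obtain a homogeneous recovery map, and let the slack parameter tend to zero. Your remarks on the two subtleties (linearity of $N$ and the need for the homogeneous rather than merely positively homogeneous conclusion) are exactly the points the paper glosses over with the phrase ``non-adaptive (and thus homogeneous).''
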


\begin{proof}[Proof of Theorem~\ref{thm:main}]
Let $\delta>0$. By 
Lemma~\ref{lem:non-ada}
there exists a non-adaptive (and thus homogeneous) information mapping $N\colon \calF \to \KK^n$ such that
\[
 \diam(N) \,\le\, (2+\delta)\,\inf_{A_n \in \mathcal{A}_n} \err(A_n).
\]
By Proposition~\ref{prop:main}, there exists a homogeneous mapping $\varphi^*\colon\KK^n\to \calG$ such that the homogeneous algorithm $A_n^*:=\varphi^*\circ N$ satisfies
\[
 \err(A_n^*) \,\le\, (1+\delta)\, \diam(N)
 \,\le\, (1+\delta)\, (2+\delta)\,\inf_{A_n \in \mathcal{A}_n} \err(A_n),
\]
and the proof is finished.
\end{proof}

\begin{remark}
 By Theorem \ref{thm:main}, homogeneous recovery maps are essentially (i.e., up to a small absolute multiplicative constant) optimal for linear problems while linear recovery maps are not. To compensate for the non-linearity, one may ask whether homogeneous recovery maps with a finite-dimensional image are optimal. But this is not the case. There exists a linear solution operator $S\colon \calF \to \calG$ such that for any homogeneous information mapping $N$ and every homogeneous recovery map $\varphi$ with finite-dimensional image, the algorithm $\varphi\circ N$ has infinite error, while $\varphi\circ N$ has finite error for other choices of $\varphi$, see \cite[Remark 2.1]{WW86}.
\end{remark}

\begin{remark}\label{rem:famous-hom}
 For many linear problems, as for example in numerical integration, one typically uses algorithms that are not only homogeneous, but even linear.
 So one may ask to what extent
 homogeneous algorithms are relevant
 at all from a practical point of view. 
 Let us thus mention a few specific examples of homogeneous and non-linear algorithms that are used for linear problems in the literature.
 \begin{itemize}
     \item $\ell^1$-minimization or basis pursuit is proven to be vastly superior to any linear method in the area of compressive sensing, see, e.g., the book \cite{FR}.
     \item The median of means is a popular estimator for the computation of integrals. When used in a randomized setting, the median leads to a probability amplification, see, e.g., \cite[Proposition~2.2]{KNR19}. In the deterministic sense, it can be used to obtain certain universality properties, see \cite{GL22,GSM23}.
     \item The paper \cite{Hei23b} considers a randomized setting and gives an example of a linear problem where adaptive algorithms achieve a better rate of convergence than non-adaptive algorithms. The algorithms that achieve the optimal rate are homogeneous, but not linear. 
 \end{itemize}
\end{remark}

\medskip

Let us now assume that $\calF$ is a normed space and that the input class $F$ is the unit ball of~$\calF$.
Then the $n$th minimal worst case error of the problem $(S\colon\calF\to\calG, F,\Lambda)$ is defined as
\[
\err(n) \,:=\, \err(n,S,\Lambda) \,:=\, \inf_{A_n \in \mathcal A_n} \err(A_n,S,F)
\]
with the worst case error $\err(A_n,S,F)$ as defined in \eqref{def:wc}, i.e.,
\[
 \err(A_n) \,=\, \err(A_n,S,F) \,=\, \sup_{\Vert f \Vert_\calF \le 1} \Vert S(f) - A_n(f) \Vert_\calG.
\]
It has recently been shown in \cite[Lemma~1.3]{V22} for a broad class of approximation problems that
the $n$th minimal worst case error over the unit ball 
coincides up to a factor of at most eight\footnote{The formula in \cite[Lemma~1.3]{V22} shows the factor four but only compares with the minimal error of non-adaptive algorithms, which is why an additional factor of two comes into play.} with the $n$th minimal relative error on the full input space, i.e., with
\[
\widetilde\err(n) \,:=\, 
\widetilde\err(n,S,\Lambda) \,:=\,
\inf_{A_n \in \mathcal A_n} \widetilde\err(A_n,S),
\]
where
\[
\widetilde\err(A_n) \,:=\, 
\widetilde\err(A_n,S) \,:=\, 
\sup_{f\in\calF\setminus\{0\}} \frac{\Vert S(f) - A_n(f) \Vert_\calG}{\Vert f\Vert_\calF}.
\]
As a corollary to Theorem~\ref{thm:main}, 
we obtain that this result is true for all linear problems, 
where the factor eight can be replaced by the factor two.

\begin{corollary}\label{cor:error-notions}
Consider a linear problem $(S\colon\calF \to \calG,F,\Lambda)$, where $F$ is the unit ball of $\calF$ and $\calG$ is complete. For any $n\in\N$, we have
\[
 \err(n) \,\le\,
 \widetilde\err(n) \,\le\, 2\, \err(n).
\]
\end{corollary}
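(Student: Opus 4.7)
The plan is to prove the two inequalities separately, the first by a direct definitional argument and the second by invoking Theorem~\ref{thm:main} to pass to a homogeneous near-optimal algorithm.

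For the lower bound $\err(n) \le \widetilde\err(n)$, I would observe that for any algorithm $A_n \in \mathcal A_n$ and any $f \in \calF$ with $0 < \Vert f \Vert_\calF \le 1$ one has
\[
 \Vert S(f) - A_n(f) \Vert_\calG \,\le\, \widetilde\err(A_n)\,\Vert f\Vert_\calF \,\le\, \widetilde\err(A_n),
\]
while the bound is trivial at $f=0$ since the zero function lies in $F$ and contributes nothing beyond $\widetilde\err(A_n)$ via continuity of the supremum. Taking the supremum over the unit ball and then the infimum over $\mathcal A_n$ yields $\err(n) \le \widetilde\err(n)$.

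For the upper bound $\widetilde\err(n) \le 2\,\err(n)$, I would fix $\delta>0$ and appeal to Theorem~\ref{thm:main} to produce a homogeneous algorithm $A_n^* \in \mathcal A_n^*$ with
\[
 \err(A_n^*) \,\le\, 2\,\err(n) + \delta.
\]
For any $f\in\calF\setminus\{0\}$, set $\lambda := 1/\Vert f\Vert_\calF$. Since $S$ is linear and $A_n^*$ is homogeneous, both commute with scalar multiplication, so
\[
 \frac{\Vert S(f) - A_n^*(f)\Vert_\calG}{\Vert f\Vert_\calF} \,=\, \bigl\Vert S(\lambda f) - A_n^*(\lambda f) \bigr\Vert_\calG,
\]
and since $\Vert \lambda f\Vert_\calF = 1$ places $\lambda f$ in the unit ball $F$, this is bounded by $\err(A_n^*)$. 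Hence $\widetilde\err(A_n^*) \le \err(A_n^*) \le 2\,\err(n) + \delta$, and letting $\delta \to 0$ gives $\widetilde\err(n) \le 2\,\err(n)$.

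There is no real obstacle here: the only nontrivial ingredient is Theorem~\ref{thm:main}, which guarantees the existence of the near-optimal homogeneous algorithm needed to transfer an error bound from the unit ball to the whole space. Without homogeneity one could only compare the two error notions up to a constant depending on the offset $A_n(0)$, which is precisely why Theorem~\ref{thm:main} yields the sharper constant $2$ here in place of the constant $8$ from \cite[Lemma~1.3]{V22}.
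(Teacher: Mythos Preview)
Your argument for the upper bound $\widetilde\err(n)\le 2\,\err(n)$ is correct and matches the paper's proof: take a near-optimal homogeneous algorithm via Theorem~\ref{thm:main} and use homogeneity to identify $\err(A_n^*)$ with $\widetilde\err(A_n^*)$.

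Your argument for the lower bound $\err(n)\le\widetilde\err(n)$, however, has a genuine gap at $f=0$. You implicitly claim that $\err(A_n)\le\widetilde\err(A_n)$ for \emph{every} algorithm $A_n$, dismissing the point $f=0$ ``via continuity of the supremum''. But $A_n$ is an arbitrary map $\varphi\circ N$ with no continuity assumed, and the inequality can fail. For instance, with $\calF=\calG=\RR$, $S=\mathrm{id}$, $N=\mathrm{id}$, and $\varphi(y)=y$ for $y\ne 0$ while $\varphi(0)=100$, one gets $\widetilde\err(A_1)=0$ but $\err(A_1)=100$. So the pointwise inequality you use is false in general.

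The paper repairs this by first showing that in computing $\widetilde\err(n)$ one may restrict to algorithms with $A_n(0)=0$: given $A_n=\varphi\circ N$ with $\varphi(0)\ne 0$, redefine $\varphi$ to vanish at $0$ and argue that $\widetilde\err$ does not increase (the only delicate case being $f\ne 0$ with $N(f)=0$ and $S(f)\ne 0$, where one shows $\widetilde\err(A_n)=\infty$ anyway). For algorithms with $A_n(0)=0$ your displayed inequality over $0<\Vert f\Vert_\calF\le 1$ already gives $\err(A_n)\le\widetilde\err(A_n)$, since the contribution at $f=0$ is then zero. Adding this reduction step fixes your proof.
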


\begin{proof}
The second inequality follows from Theorem~\ref{thm:main} and the fact that $\err(A_n) = \widetilde\err(A_n)$ whenever $A_n$ is homogeneous. Namely,
\[
 \widetilde\err(n) \,\le\,
 \inf_{A_n \text{ homogeneous}} \widetilde\err(A_n) \,=\,
  \inf_{A_n \text{ homogeneous}} \err(A_n) \,\le\,
  2 \,\err(n).
\]
The first inequality uses that any algorithm $A_n$ with $A_n(0)=0$ satisfies
\[
 \err(A_n) \,=\, \sup_{f\in\calF \setminus\{0\} \colon \Vert f \Vert_\calF\le 1} \Vert S(f) - A_n (f) \Vert_\calG
 \,\le\, \sup_{f\in\calF \setminus\{0\} \colon \Vert f \Vert_\calF\le 1} \frac{\Vert S (f) - A_n (f) \Vert_\calG}{\Vert f\Vert_\calF}
 \,\le\, \widetilde\err(A_n)
\]
and hence
\[
 \err(n) \,\le\, \inf_{A_n \colon A_n(0)=0} \err(A_n)
 \,\le\, \inf_{A_n \colon A_n(0)=0} \widetilde\err(A_n).
\]
It remains to show that
\begin{equation}\label{eq:error_tilde}
 \inf_{A_n \colon A_n(0)=0} \widetilde\err(A_n)
 \,\le\, \widetilde\err(n).
\end{equation}
To this end, suppose that we have an 
algorithm $A_n$ such that $A_n=\varphi\circ N$ with $\varphi (0) \ne 0$. We then introduce a mapping $\widetilde\varphi$ such that $\widetilde\varphi (0)=0$ and $\widetilde\varphi (y)=\varphi(y)$ for all $y\neq 0$ and set $A_n^*:=\widetilde\varphi \circ N$. 
We claim that $\widetilde\err (A_n^*) \le \widetilde\err (A_n)$, which proves \eqref{eq:error_tilde}.

Indeed, let us consider the set $N^{-1}(0)$ of those $f\in\calF$ for which $N(f)=0$, and for which then $A_n (f)\neq 0$, but $A_n^* (f)=0$. If all such $f$ satisfy that $S(f)=0$, then 
$A_n^*$ is at least as good as $A_n$ and thus $\widetilde\err (A_n^*) \le \widetilde\err (A_n)$. 
If, on the other hand, there exist $f\in N^{-1}(0)$ such that $S(f)\neq 0$, then we argue that $\widetilde{\err}(A_n)=\infty$ and thus $\widetilde\err (A_n^*) \le \widetilde\err (A_n)$ again holds.

Since $S$ is linear, we have $f\ne 0$.
On the other hand,
we have $N(c f)=0$ for any positive constant $c$. Then, however, by the linearity of $S$,
\[
 \widetilde\err (A_n) \,\ge\, \frac{\norm{S(cf)-\varphi (0)}_{\calG}}{\norm{cf}_{\calF}} \,=\, \frac{\norm{c S(f)-\varphi (0)}_{\calG}}{c\norm{f}_{\calF}}.
\]
Letting $c$ tend to zero, we obtain the asserted identity $\widetilde{\err}(A_n)=\infty$.
\end{proof}

\section{Homogeneous algorithms in other settings} \label{sec:hom_problems}

Recall from the introduction that our problems are described by a solution operator $S$ mapping from a vector space $\calF$ to a normed space $\calG$,
an input set $F\subseteq \calF$, and a class $\Lambda\subseteq \KK^\calF$ of admissible measurements. The problem is called linear if
\begin{itemize}
\item[(1)] the worst case error is considered on a non-empty, convex, and balanced subset $F$ of $\calF$, 
\item[(2)] the solution operator $S\colon\calF \to \calG$ is linear,
\item[(3)] the class $\Lambda$ of admissible measurements contains only linear functionals.
\end{itemize}
In Section~\ref{sec:linear_problems},
we have seen that (positively) homogeneous algorithms are optimal for linear problems.
It is natural to ask whether this extends to a more general class of (positively) homo\-geneous problems. Suppose, for example, that
we replace one or several of the above conditions by the weaker conditions,
\begin{itemize}
\item[(1')] the worst case 
is considered on a class $F\subseteq \calF$ satisfying $\lambda f\in F$ for $f\in F$ and $0\le \lambda\le 1$,
\item[(2')] the solution operator $S\colon\calF \to \calG$ is positively homogeneous,
\item[(3')] the class $\Lambda$ of admissible measurements contains only positively homogeneous functionals.
\end{itemize}
Are positively homogeneous algorithms still optimal for such positively homogeneous problems? We do not know the answer to this question.

\smallskip

\begin{open}
    Are positively homogeneous algorithms optimal for positively homogeneous problems?
    That is, if we consider a class of problems where one or several of the conditions (1)--(3) are replaced by their respective weaker variants (1')--(3'), do there exist constants $C>0$ and $b\in \NN$ 
    such that, for all such problems 
    and all $n\in\NN$, we have 
    \[
    \inf_{\substack{ A \in \mathcal{A}_{bn} \\ A \text{ pos.~hom.}}} \err(A)
    \ \le\ 
    C \inf_{A \in \mathcal{A}_n} \err(A) \ \ ?
    \]
\end{open}

Of course, a similar question can be asked for homogeneous problems instead of positively homogeneous problems.
Examples of such positively homogeneous problems could be the computation of some norm or the maximum of a function $f\in F$.

If we only consider positively homogeneous problems where the input set $F$ is convex and balanced,
Proposition~\ref{prop:main} implies that 
positively homogeneous algorithms are optimal 
whenever positively homogeneous information mappings are optimal (in the sense that the corresponding diameter of information is close to minimal). In particular, for such problems, 
positively homogeneous algorithms are optimal
whenever non-adaptive algorithms are optimal.

This leads to the question whether non-adaptive algorithms are optimal for homogeneous problems. Unfortunately, this is not the case.
The optimality of non-adaptive algorithms does not stay true
if any of the conditions (1)--(3) is replaced by its respective weaker variant, as illustrated by the following examples.

\begin{example}
An example where the properties (1'), (2), and (3) are satisfied is given in \cite{Kor94}, 
see also \cite[Section~4.2.1]{NW08}.
Here, $F$ is the class of all monotonically increasing, $\alpha$-H\"{o}lder 
continuous functions on an interval with H\"{o}lder-constant at most one and $\alpha<1$,
$S$ is the embedding into $L_\infty$
and $\Lambda$ is the class of all function evaluations.
Then the error of non-adaptive algorithms is of order $n^{-\alpha}$
while the error of adaptive algorithms is of polynomial order $n^{-1}$,
where $n$ is the number of measurements. We also refer to \cite{GGM23} for related results. 
\end{example}

\begin{example}\label{ex_bisection}
We give an example where the properties (1), (2'), and (3) are satisfied.
We let $F$ be the unit ball in the space $\calF$ of all Lipschitz-continuous functions 
on $[0,1]$ with the norm
\[
 \Vert f \Vert_{\calF} \,:=\, \max \left\{ \Vert f \Vert_\infty\,,\, \sup_{x\ne y} \frac{|f(x)-f(y)|}{|x-y|} \right\}.
\]
We do bisection in order to approximate a point $z\in [0,1/2]$ with $f(z) = \frac12(f(0)+f(1/2))$.
The bisection method converges to a solution $z(f)$ of the above equation.
Note that the limit $z(f)$ of the bisection method is unique
even if the above equation has more than one solution.
The approximation $z_n(f)$ obtained after $n$ bisection steps (using $n+1$ function values)
satisfies $|z(f)-z_n(f)| \le 2^{-n}$.
We consider the solution operator $S(f)=f(z(f)+1/2)$,
which is homogeneous due to $z(\lambda f)=z(f)$ for any $\lambda\ne0$,
and the class $\Lambda^{\rm std}$ of all function evaluations.
The adaptive algorithm $A_n(f)=f(z_n(f)+1/2)$ has an error of at most~$2^{-n}$.

On the other hand,
any non-adaptive algorithm using $n$ function values has an error of at least $1/(8n)$.
Indeed, there is an interval $(a,a+1/(2n))\subseteq [0,1/2]$ that does not contain any sampling point.
The algorithm thus cannot distinguish the two functions $f$ and $g$
defined by the properties that $f(0)=g(0)=0$,
$f$ is constant on $[0,a]$ and $[a+1/(4n),1/2]$,
$g$ is constant on $[0,a+1/(4n)]$ and $[a+1/(2n),1/2]$,
and both functions are linearly increasing with slope one on the rest of the interval $[0,1]$.
We illustrate this situation in Figure \ref{fig_bisection}.
\begin{figure}[h]\label{fig_bisection}
\begin{center}
 \includegraphics[width=0.8\textwidth]{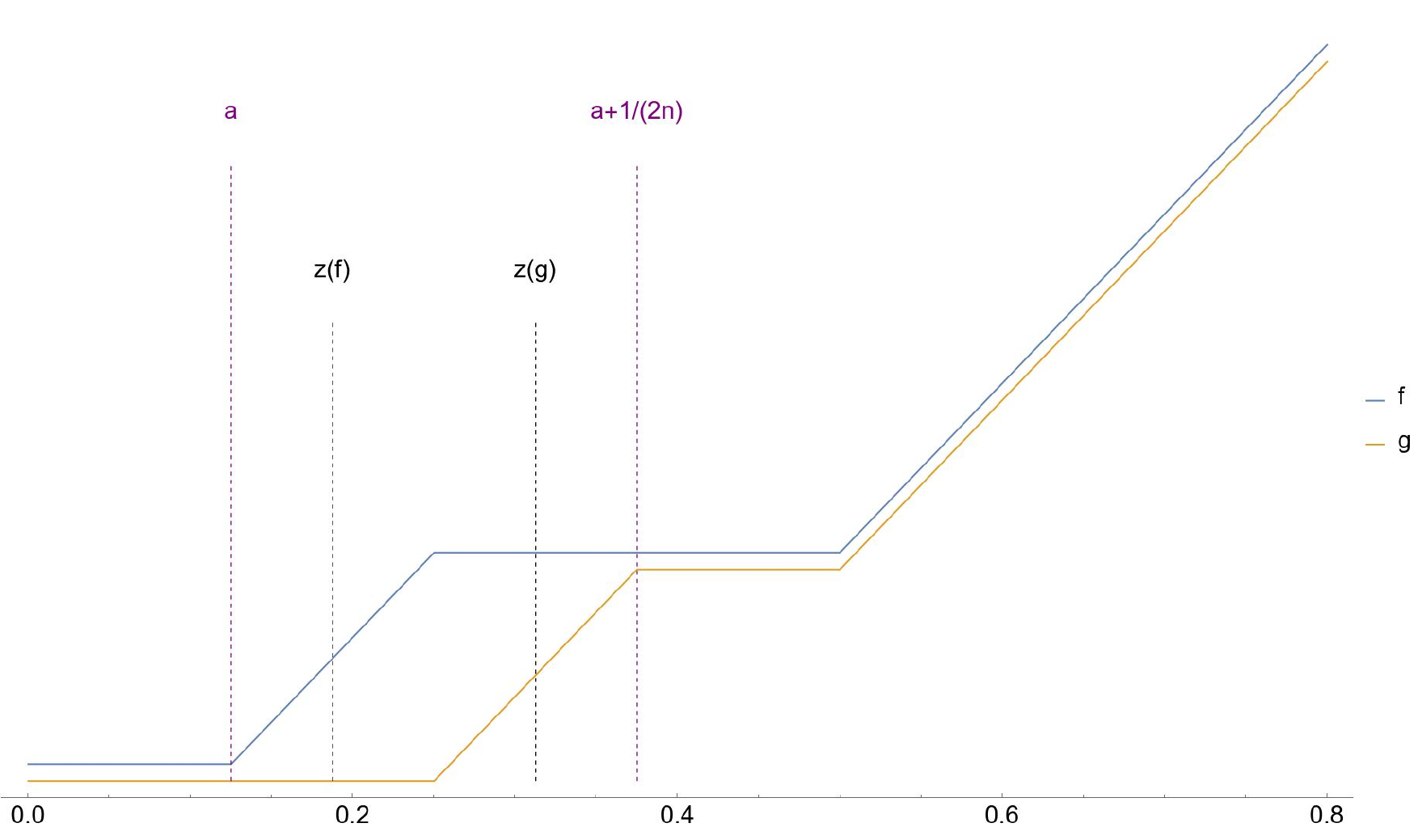} %
 \caption{The situation described in Example \ref{ex_bisection}.}
\end{center} 
\end{figure}
However, since $S(f)$ and $S(g)$ differ by $1/(4n)$,
the error of the algorithm is at least $1/(8n)$.
Thus, the error of certain adaptive algorithms converges exponentially
and the error of non-adaptive algorithms converges only linearly. This problem is clearly related to the zero-finding problem (see, e.g., \cite{S01} and the references therein for detailed information on the zero-finding problem).
\end{example}

\begin{example}
We also give an example where the properties (1), (2), and (3') are satisfied.
We let $\calF$ be defined as in the previous example
and consider the $M$-fold Cartesian product $\calF_M := \calF \times \dots\times \calF$,
with elements denoted by $f=(f_1,\dots,f_M)$,
equipped with the norm
\[
 \Vert f \Vert_{\calF_M} \,:=\, \sum_{i=1}^M \Vert f_i \Vert_{\calF}.
\]
Let $F_M$ be the unit ball of $\calF_M$ and let $\calG_M$ be the same set of functions as $\calF_M$, but with the norm
\[
 \Vert f \Vert_{\calG_M} \,:=\, \max_{i\le M} \Vert f_i \Vert_\infty.
\]
Furthermore, let $S$ be the embedding of $\calF_M$ into $\calG_M$.

The class $\Lambda$ shall consist of the homogeneous measurements 
$f_i(x)$ with $i\le M$ and $x\in [0,1]$ 
and $\Vert f_i \Vert_{\calF}$ with $i\le M$. 
An adaptive algorithm using at most $n$ measurements is given by first computing the $M$ norms $\Vert f_i \Vert_{\calF}$
and then approximating each $f_i$ by a piecewise constant function
that interpolates $m_i$ function values of $f_i$ at equi-spaced points,
where $m_i =0$ 
if $\Vert f_i \Vert_\calF < 2/(n-M)$ (i.e., in this case we approximate $f_i$ by the zero function),
and  $m_i= \lceil \Vert f_i \Vert_\calF (n-M)/2 \rceil \le \Vert f_i \Vert_\calF (n-M)$ otherwise. 
This algorithm has an error of at most $2/(n-M)$.

On the other hand, for a non-adaptive algorithm taking $n$ measurements,
there exists some $i$ where at most $n/M$ samples of $f_i$ are computed (recall the form of $\Lambda$).
Then there is some $f^* \in F$ that equals zero at all those points
but satisfies $\Vert f^*\Vert_\infty \ge M/(2n)$ (by choosing $f^*$ to 
be linear with slope 1 left of the midpoints of the intervals between the interpolation nodes, and slope -1 right of them, respectively). The algorithm cannot distinguish $f\in\calF_M$ defined by $f_j=0$ for $j\ne i$
and $f_i=f^*$ from its negative since all measurements for $f$ and $-f$ yield the same value (including the possible norm-evaluations).
Thus, it has an error of at least $M/(2n)$.
Taking, for instance, $M=n/2$, we obtain that all non-adaptive algorithms
have an error of at least $1/4$, while a suitable adaptive algorithm can achieve an error of at most $4/n$.
\end{example}

\bigskip

Another interesting question is whether the optimality of homogeneous algorithms for linear problems remains true in the randomized setting. Recall from the introduction that our deterministic algorithms are described as the composition of an information mapping $N\colon \calF \to \KK^n$ and a recovery map $\varphi\colon \KK^n \to \calG$. In the randomized setting, $N$ and $\varphi$ can be random and the randomized error of the algorithm $A=\varphi \circ N$ is defined as
\[
 e^{\rm ran}(A) \,:=\, \sup_{f\in F}\, \EE\, \left[\Vert S(f) - A(f) \Vert_\calG \right].
\]
We refrain from a precise description of the setting and refer to \cite[Section~4.3.3]{NW08} and the references therein. We denote by $\mathcal{A}_n^{\rm ran}$ the class of all randomized algorithms with cardinality at most $n$.
The algorithm $A$ is called homogeneous if almost every realization of $A$ is homogeneous.

\smallskip

\begin{open}
    Are homogeneous algorithms optimal for linear problems in the randomized setting?
    That is, do there exist constants $C,c \in\NN$ such that, for all linear problems
    and all $n\in\NN$, we have 
    \[
    \inf_{\substack{ A \in \mathcal{A}_{cn}^{\rm ran} \\ A \text{ homogeneous}}} e^{\rm ran}(A)
    \ \le\ 
    C \inf_{A \in \mathcal{A}_n^{\rm ran}} e^{\rm ran}(A) \ \ ?
    \]
\end{open}

\smallskip

We remark that the paper \cite{Ku16},
see also \cite[Section~2]{KuDiss}, contains a general lower bound for the error of randomized algorithms in terms of Bernstein numbers, while an improved bound is available for homogeneous algorithms.

Again, there is a relation to the question whether non-adaptive algorithms are optimal. But the answer to this question differs from the deterministic setting. It only turned out recently that there exist linear problems where adaptive randomized algorithms are significantly better than non-adaptive ones. The first such example was found by Heinrich in \cite{Hei23a}. We refer to \cite{Hei23b,Hei24,KNW} for further progress on this matter.

\smallskip

As a consequence, the optimality of homogeneous algorithms for linear problems is likely harder to prove in the randomized setting than it was in the deterministic setting (if possible at all). In the deterministic setting, we already knew that we could take $N$ non-adaptive, and hence homogeneous, and only had to prove that we can also take $\varphi$ homogeneous. In the randomized setting, it is already unclear (to us) whether an almost minimal error can be achieved with homogeneous $N$ and arbitrary $\varphi$.


\section{Solvable problems and problems on cones}
\label{sec:cones}

Also in this section, our problems are given by a solution operator $S\colon\calF\to \calG$
between normed spaces $\calF$ and $\calG$ over $\KK$,
a class $\Lambda \subseteq \KK^{\calF}$ 
of admissible measurements,
and a class $F\subseteq \calF$ of inputs.
In the previous sections, we studied linear and homogeneous problems in the worst case setting. Those problems were solvable in the following sense.

\begin{definition}
 We say that the problem $(S\colon \calF \to \calG, F, \Lambda)$ is \emph{uniformly solvable} iff, for any $\varepsilon>0$, there is some $n\in\NN$ and an algorithm $A_n$ of the form \eqref{eq:alg_general} such that $\err(A_n) \le \varepsilon$. In this case, we define the $\varepsilon$-complexity of the problem by
 \[
  \comp(\varepsilon) \,:=\, \comp(\varepsilon,S,F,\Lambda) \,:=\, \min\{n\in\NN \mid \exists A_n\colon \err(A_n) \le \varepsilon \}.
 \]
\end{definition}

We now turn to problems which are not uniformly solvable. Specifically, we are interested in the case that the input set $F\subseteq \calF$ is a cone. Then, in general, the worst case error of any algorithm using homogeneous measurements with fixed $n$ is infinite. 
More precisely, the following holds.

\begin{proposition}\label{lem:not-solvable}
    Let $S\colon\calF\to \calG$ be a 
    homogeneous mapping
    between normed spaces, let $\Lambda$ 
    be a class of homogeneous functionals on $\calF$,
    and let $F\subseteq \calF$ be a cone. Then the problem $(S,F,\Lambda)$ is uniformly solvable if and only if there exists some $n\in\NN$ such that $\err(n)=0$.
\end{proposition}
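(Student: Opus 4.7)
The plan is to prove the equivalence directly, with the reverse direction being immediate and the forward direction relying on a scaling argument that exploits the cone structure together with the homogeneity of $S$ and of the functionals in $\Lambda$.

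For the ``if'' direction, suppose that $\err(n^*)=0$ for some $n^*\in\NN$. Then by the definition of the infimum, for every $\varepsilon>0$ one can choose an algorithm $A_{n^*}$ of the form \eqref{eq:alg_general} with $\err(A_{n^*})\le\varepsilon$. This is precisely the definition of uniform solvability.

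For the ``only if'' direction, assume the problem is uniformly solvable. Pick any $\varepsilon_0>0$ and an algorithm $A_n$ with $\err(A_n)\le\varepsilon_0<\infty$. The idea is to use the cone to rescale the input and then shrink the error by the same factor. For each $\lambda>0$, define a new candidate algorithm
\[
 B_n^{(\lambda)}(f) \,:=\, \frac{1}{\lambda}\, A_n(\lambda f), \quad f \in \calF.
\]
Since $F$ is a cone, $\lambda f\in F$ whenever $f\in F$ and $\lambda>0$, so running $A_n$ on $\lambda f$ is legitimate. Using the homogeneity of $S$, one immediately computes
\[
 \Vert S(f) - B_n^{(\lambda)}(f) \Vert_\calG \,=\, \frac{1}{\lambda}\,\Vert S(\lambda f) - A_n(\lambda f) \Vert_\calG \,\le\, \frac{\err(A_n)}{\lambda},
\]
which yields $\err(B_n^{(\lambda)})\le\err(A_n)/\lambda$. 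Letting $\lambda\to\infty$ gives $\err(n)\le 0$ and thus $\err(n)=0$.

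The one point that needs care is verifying that $B_n^{(\lambda)}$ is itself an admissible algorithm, that is, an element of $\mathcal{A}_n$ that uses at most $n$ adaptive measurements of $f$ drawn from $\Lambda$. This is where the homogeneity of each $L\in\Lambda$ is essential: every measurement of $\lambda f$ equals $\lambda$ times a measurement of $f$, so one can realize $B_n^{(\lambda)}$ by sampling $L_1(f)$, then choosing the next measurement via the adaptive rule of $A_n$ applied to the rescaled value $\lambda L_1(f)$, and so on; the recovery map sends the tuple $(L_1(f),\ldots,L_n(f))$ to $A_n(\lambda f)/\lambda$. The main obstacle is purely this bookkeeping step; once one accepts that $B_n^{(\lambda)}\in\mathcal{A}_n$, the error estimate is a one-line consequence of the homogeneity of $S$.
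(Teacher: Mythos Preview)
Your proof is correct and follows essentially the same scaling idea as the paper: rescale the input by a factor $\lambda>0$ (the paper calls it $r$), use homogeneity of $S$ and of the measurements to pull the scaling through, and conclude that the error shrinks like $1/\lambda$. The only cosmetic difference is that the paper first passes to the diameter of information via \eqref{eq:diam-err} and shows $\diam(N_r)\le\diam(N)/r$, whereas you work directly at the level of algorithms and errors; your route is slightly more direct and avoids the detour through diameters.
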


\begin{proof}
We prove the stronger statement that any algorithm with finite error can be turned into an algorithm with arbitrarily small error and the same cost. Due to \eqref{eq:diam-err}, this is equivalent to saying that every information mapping of the form \eqref{eq:info_map} with finite diameter can be turned into an information mapping with the same cost and a diameter arbitrarily close to zero. To this end, assume that $N$ is an information mapping of the form \eqref{eq:info_map} such that $\diam(N)<\infty$. Given $r>0$, we consider the new information mapping $N_r$, which chooses those measurement maps $L_i$ that the mapping $N$ would choose for $rf$ instead of $f$. Note that the measurement results of $rf$ are available due to $L(rf)=rL(f)$ for all $L\in\Lambda$ and $f\in F$. 
If $f,g\in F$ are such that $N_r(f)=N_r(g)$, then this implies $N(rf)=N(rg)$. Thus,
\[
 \Vert Sf - Sg \Vert_\calG
 \,=\, \frac{1}{r} \Vert S(rf) - S(rg) \Vert_\calG
 \,\le\, \frac{\diam(N)}{r}
\]
and hence the diameter of $N_r$ is bounded by $\diam(N)/r$, which can be made arbitrarily small by choosing $r$ sufficiently large.
\end{proof}

In order to study problems that are not uniformly solvable, we consider a more general class of algorithms, where not only the measurements $L_i\in\Lambda$ themselves, but also the cardinalities of the algorithms, i.e., the number of those measurements, may be chosen adaptively. Roughly speaking this means that, after each measurement, we may decide based on the already computed measurements whether we compute another measurement and which one. Formally, those algorithms can be described as follows. 

An \emph{information mapping} is a function $N\colon F \rightarrow \KK^{<\omega}$, where $\KK^{<\omega}:= \bigcup_{n\in\N} \KK^n$, for which there are functions $L_j: F \times \KK^{j-1} \rightarrow \KK$ with ${L_j(\cdot,y_1,\dots,y_{j-1})\in\Lambda}$ for all $y_1,\dots,y_{j-1} \in\KK$ and a Boolean function $\Delta\colon \KK^{<\omega} \rightarrow \{0,1\}$ such that $N(f) = (y_1,\dots,y_{n(f)})$ with $y_j = L_j(f,y_1,\dots,y_{j-1})$ and $n(f) = \min \{n\in\N \mid \Delta(y_1,\dots,y_n)=0\} $.
That is, if we have already obtained the data $y_1,\dots,y_{j-1}$, we next perform the measurement $L_j(\cdot,y_1,\dots,y_{j-1})$, unless $\Delta(y_1,\dots,y_{j-1})= 0$.

Now, an \emph{algorithm} $A$ for the problem $(S,F,\Lambda)$ 
is defined as a pair $(N,\varphi)$ given by an information mapping $N$ as above 
and an arbitrary mapping $\varphi\colon \KK^{<\omega}\rightarrow \calG$,
which is used to turn our finite information $N(f)\in \KK^{<\omega}$ into an approximation of the solution $S(f) \in \calG$.
This results in a mapping $\varphi \circ N\colon F\to G$ used to approximate the solution operator $S\colon F \to \calG$.
In slight abuse of notation, we denote this mapping also by $A$, that is, we write $A=\varphi\circ N$.
We denote the class of all such algorithms by $\mathcal{A}^+$.
We write $\cost(A,f):=n(f)$ for the 
(information) \emph{cost} of the algorithm at $f\in F$, and for $F_0\subseteq F$, we set
\[
 \cost(A,F_0) \,:=\, \sup \{ \cost(A,f) \mid f\in F_0\}.
\]

We are now ready to define a weaker notion of solvability.

\begin{definition}
    We say that an algorithm $A \in \mathcal{A}^+$ is \emph{$\varepsilon$-approximating} for $S$ on $F$, iff $\Vert Sf - Af \Vert_\calG \le \varepsilon$ for all $f\in F$. The problem $(S\colon \calF \to \calG, F, \Lambda)$ is called \emph{weakly solvable} if, for any $\varepsilon>0$, there exists an $\varepsilon$-approximating algorithm $A \in \mathcal{A}^+$.
\end{definition}

This notion of solvability is inspired by \cite{KNR19}, where an analogous notion of solvability is considered in a randomized setting.
Note that an $\varepsilon$-approximating algorithm has finite cost for each input $f\in F$, but it may have infinite cost on the full input set $F$.

As mentioned above, we are mainly interested in problems where the input set is a cone.
Problems on cones have lately been considered in \cite{KNR19} and \cite{DHKM20}, see also \cite{HJR16, HJRL18}. Here, we study the following general kind of cones. 

\begin{ass}\label{ass:general-cones}
Let $S\colon \calF \to \calG$ be a linear operator between normed spaces $\calF$ and $\calG$ and let $\Lambda$ be a class of linear functionals on $\calF$. 
We consider the problem $(S,\calC_t,\Lambda)$ on the cone
\[
 \calC_t \,:=\, \left\{ f\in \calF \colon \Vert f \Vert_\calF \le t\,\Vert Tf \Vert_\calH \right\},
\]
where $T\colon \calF \to \calH$ is any linear mapping to another normed space $\calH$
and $t>0$ is a constant, which we call the \emph{inflation factor}.
\end{ass}

This setting covers a situation similar to \cite{KNR19},
when $S(f)$ is the integral of a function $f\in L_4$ and $T\colon L_4 \to L_2$ is the identity.
Then, essentially, $\calC_t$ is the cone of all functions with bounded kurtosis.
The setting also covers a situation considered in \cite{DHKM20},
where $S$ is a diagonal operator mapping a Schauder basis of $\calF$ onto a Schauder basis of $\calG$ and $T\colon \calF \to \calF$ is a basis projection with finite rank.
We will come back to these examples later.

The problem $(S,\calC_t,\Lambda)$ as defined in Assumption \ref{ass:general-cones} is included in the assumptions of Proposition~\ref{lem:not-solvable}. Therefore, it is usually not uniformly solvable. 
Here, we show that the problem is weakly solvable for a wide range of operators~$S$ and~$T$. We recall that we write $B_\calF$ to denote the unit ball in the space $\calF$, and by $rB_\calF$ the ball with radius $r$.

\smallskip

\begin{theorem}[Full version]\label{thm:solvable}
	Let Assumption~\ref{ass:general-cones} hold.
    If $(S,B_\calF,\Lambda)$ and $(T,B_\calF,\Lambda)$ are uniformly solvable, then the problem $(S,\calC_t,\Lambda)$ is weakly solvable for any $t>0$. 
    More precisely, for any $\varepsilon>0$, there is an $\varepsilon$-approximating algorithm $A_\varepsilon \in\mathcal{A}^+$ such that, for all $f\in\calC_t\setminus\{0\}$, 
    \begin{equation}\label{eq:cost_bound_th2}
     \cost(A_\varepsilon,f) 
     \,\le\, 
     \comp\left(\frac{1}{5t},T,B_\calF,\Lambda\right) 
     +
     \comp\left(\frac{\varepsilon}{7 t\,\Vert Tf \Vert_\calH} ,S,B_\calF,\Lambda\right).
    \end{equation}
\end{theorem}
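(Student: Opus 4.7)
The plan is to construct an adaptive two-stage algorithm in $\mathcal{A}^+$. In Stage~1 we run a near-optimal homogeneous algorithm for $T$ on $B_\calF$ and use its output to pin down $\|Tf\|_\calH$ up to a constant factor; in Stage~2 we use that estimate to calibrate a near-optimal homogeneous algorithm for $S$ on $B_\calF$ whose accuracy is tight enough that, after invoking the cone inequality $\|f\|_\calF\le t\|Tf\|_\calH$, the overall error is at most $\varepsilon$. The workhorse throughout is Theorem~\ref{thm:main} (via Corollary~\ref{cor:error-notions}): any algorithm achieving error $\le\eta$ on $B_\calF$ can be replaced, at the same cost and with arbitrarily small slack $\tau>0$, by a \emph{homogeneous} algorithm of error $\le 2\eta+\tau$, and for a homogeneous algorithm this bound extends to all of $\calF$ as $\|A(f)-S(f)\|_\calG\le(2\eta+\tau)\|f\|_\calF$.

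\emph{Stage 1 (sandwiching $\|Tf\|_\calH$).} Set $n_T:=\comp(1/(5t),T,B_\calF,\Lambda)$ and let $A^T$ be a homogeneous algorithm of cost $n_T$ with $\err(A^T,T,B_\calF)\le 2/(5t)+\tau$. For $f\in\calC_t$, homogeneity combined with the cone condition gives $\|A^T(f)-Tf\|_\calH\le(2/5+\tau t)\|Tf\|_\calH$, so by the reverse triangle inequality the output $r:=\|A^T(f)\|_\calH$ satisfies
\[
(3/5-\tau t)\,\|Tf\|_\calH \,\le\, r \,\le\, (7/5+\tau t)\,\|Tf\|_\calH .
\]
If $f=0$ then $r=0$ and the algorithm simply outputs $0=Sf$; otherwise $\|Tf\|_\calH>0$ by the cone condition, hence $r>0$, and $\|f\|_\calF\le t\|Tf\|_\calH\le 5tr/(3-5\tau t)$.

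\emph{Stage 2 (approximating $Sf$) and cost bound.} Adaptively, based on $r$, pick $\eta_2:=\varepsilon/(4tr)$ and $n_S:=\comp(\eta_2,S,B_\calF,\Lambda)$, and take $A^S$ to be a homogeneous algorithm of cost $n_S$ with $\err(A^S,S,B_\calF)\le 2\eta_2+\tau'$. Homogeneity together with the bound on $\|f\|_\calF$ gives
\[
\|A^S(f)-Sf\|_\calG \,\le\, (2\eta_2+\tau')\cdot\frac{5tr}{3-5\tau t} \,=\, \frac{5\varepsilon}{2(3-5\tau t)} + \frac{5tr\,\tau'}{3-5\tau t},
\]
which is at most $\varepsilon$ once $\tau,\tau'$ are chosen small enough. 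Outputting $A^S(f)$ therefore produces an $\varepsilon$-approximation of $Sf$ using $n_T+n_S$ measurements in total. For~\eqref{eq:cost_bound_th2}, monotonicity of $\comp(\cdot,S,B_\calF,\Lambda)$ together with $r\le(7/5+\tau t)\|Tf\|_\calH$ gives $\eta_2=\varepsilon/(4tr)\ge\varepsilon/(7t\|Tf\|_\calH)$ (since $4\cdot 7/5=28/5<7$), whence $n_S\le\comp(\varepsilon/(7t\|Tf\|_\calH),S,B_\calF,\Lambda)$.

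The main obstacle is careful bookkeeping of constants. Theorem~\ref{thm:main} costs a factor of $2$ per stage, and a further factor $7/5$ is paid when substituting the computed $r$ for the unknown $\|Tf\|_\calH$; the generous coefficients $5$ and $7$ in~\eqref{eq:cost_bound_th2} are chosen precisely to absorb both of these losses together with the arbitrary slack $\tau,\tau'>0$. In fact any choice $\eta_2\in(\varepsilon/(5tr),\,3\varepsilon/(10tr))$ simultaneously satisfies the error target and the cost inequality with strict margin; the clean choice $\eta_2=\varepsilon/(4tr)$ above sits comfortably in the middle of this window, so the remaining verification is just routine arithmetic.
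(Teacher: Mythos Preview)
Your proof is correct and follows essentially the same two-stage approach as the paper: use Theorem~\ref{thm:main} to obtain a homogeneous approximation to $T$, read off a multiplicative estimate for $\|Tf\|_\calH$ (equivalently, for $\|f\|_\calF$ via the cone inequality), and then calibrate a homogeneous approximation to $S$ accordingly. The only differences are cosmetic constant-tracking: the paper rounds the Stage~1 error up to $1/(2t)$ to get the clean inequality $\|f\|_\calF\le 2t\|Q_mf\|_\calH$, whereas you keep the tighter $2/(5t)+\tau$ and carry the resulting sandwich $(3/5-\tau t)\|Tf\|_\calH\le r\le(7/5+\tau t)\|Tf\|_\calH$ through the arithmetic.
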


\smallskip

To interpret the cost bound, let us assume that the operator $T$ is bounded and that 
$\Vert T \Vert$ and $t$ are absolute constants (and not very big).
Corollary~\ref{cor:error-notions}
implies that $\comp(\widetilde{\varepsilon}/R,S,B_\calF,\Lambda) \le \comp(\widetilde{\varepsilon},S,2RB_\calF,\Lambda)$. 
By putting $c=14t\Vert T\Vert$, we get for sufficiently small $\varepsilon>0$ (assuming that $\comp\left( \varepsilon,S,B_\calF,\Lambda\right) \to \infty$ for $\varepsilon \to 0$) that
\[
 \cost(A_\varepsilon, \calC_t \cap r B_\calF)
 \,\le\, 
 2 \comp\left( \varepsilon,S,crB_\calF,\Lambda\right),
\]
i.e., the cost of the algorithm on a ball is proportional to the complexity of the problem $S$ on a ball of comparable radius.

\begin{proof}
    Let $\varepsilon>0$ and $t>0$. Due to the solvability of $(T,B_\calF,\Lambda)$ and Theorem~\ref{thm:main}, there is a homogeneous algorithm $Q_m\colon \calF \to \calH$ that uses at most $m$ measurements such that 
    \[
     \Vert Tf - Q_mf \Vert_\calH \,\le\, (2t)^{-1} \cdot \Vert f \Vert_\calF, \quad 
     \forall f \in \calF,
    \]
    where $m=\comp(1/(5t),T,B_\calF,\Lambda)$. This implies that, for all $f\in\calC_t$, we have
    \[
     \Vert f \Vert_\calF 
     \,\le\, 		t\, \Vert Tf \Vert_\calH
     \,\le\, 		t\, \Vert Q_m f \Vert_\calH + t\, \Vert Tf - Q_m f \Vert_\calH
     \,\le\, 		t\, \Vert Q_m f \Vert_\calH + \frac12\, \Vert f \Vert_\calF
    \]
    and thus
    \[
     \Vert f \Vert_\calF \,\le\, 2 t\, \Vert Q_m f \Vert_\calH .
    \]
    Our algorithm consists of two steps.
    First, given the unknown $f\in\calC_t$, we compute $Q_mf$.
    If $Q_mf=0$, then we know that $f=0$ and thus obtain the exact solution $S(f)=0$. 
    In case that $Q_mf \ne 0$,
    we adaptively choose the cost $k := \comp(\varepsilon / ((4+2\delta) t \Vert Q_m f \Vert_\calH),S,B_\calF,\Lambda)$ for the second step,
    where $\delta>0$ is arbitrary.
    By the solvability of $(S,B_\calF,\Lambda)$ and Theorem~\ref{thm:main}, there exists a homogeneous algorithm $A_k \colon \calF \to \calG$ that uses at most $k$ measurements, whose worst case error on $B_\calF$ is at most $\varepsilon /(2 t\, \Vert Q_m f \Vert_\calH)$. In particular, for any $f\in \mathcal{C}_t$,
    \[
     \Vert Sf - A_k f \Vert_\calG 
     \,\le\,		\frac{\varepsilon}{2 t\, \Vert Q_m f \Vert_\calH} \cdot \Vert f \Vert_\calF
     \,\le\,		\varepsilon.
    \]
    Thus, our two-step procedure gives an $\varepsilon$-approximating algorithm that uses in total $m+k$ measurements from $\Lambda$.
    Noting that
    \[
     \Vert Q_m f \Vert_\calH \,\le\,  \Vert Tf - Q_m f \Vert_\calH + \Vert T f \Vert_\calH
     \,\le\, \frac{\Vert f \Vert_\calF}{2t} + \Vert Tf \Vert_\calH
     \,\le\, \frac32 \Vert Tf \Vert_\calH,
    \]
    we have
    \[
     k \,\le\, 
     \comp\left(\frac{\varepsilon}{3(2+\delta) t\,\Vert Tf \Vert_\calH} ,S,B_\calF,\Lambda\right),
    \]
    because $\comp(\cdot,S,B_\calF,\Lambda)$ is decreasing in its first argument.
The desired cost bound follows.
\end{proof}

\begin{remark}\label{rem:explicit-algorithms}
In the proof of Theorem \ref{thm:solvable}, we used homogeneous algorithms $Q_m$ and $A_k$ which are optimal in the sense of the complexities of the problems of approximating $T$ and $S$ for certain error thresholds. Since such optimal algorithms are often not known in practice, we point out that the same approach works with arbitrary homogeneous algorithms of our choice. We can take any homogeneous algorithm $Q_m$ for approximating $T$ with arbitrary cost $m$ and worst case error smaller than $1/(2t)$ and then any homogeneous algorithm $A_k$ for $S$ with arbitrary cost $k$ and worst case error smaller than $\varepsilon/(2t\Vert Q_m f \Vert_\calH)$, and take $A_k f$ as an approximation for $Sf$ (or zero, if $Q_mf=0$). This gives us an $\varepsilon$-approximating algorithm on $\calC_t$ with the total cost $m+k$. 
\end{remark}


\medskip

We now discuss several examples.

\subsection{Bounded kurtosis}

Consider the the integration problem
\[
 S\colon \calF \to \RR,
 \quad
 S(f) \,=\, \int_0^1 f(x)\, {\rm d}x,
\]
where $\calF$ shall be the space of all continuous real-valued functions on $[0,1]$, equipped with the 4-norm.
The input set is given by the cone
\[
 \calC_t \,:=\, \{ f\in \calF \colon \Vert f \Vert_4 \le t \, \Vert f \Vert_2 \}
\]
with some $t>1$
and the information is given by the class $\Lambda^{\rm std}$ of function evaluations.

The paper \cite{KNR19}
proves that such problems are weakly solvable in a randomized sense. In our deterministic setting, 
we quickly realize that the problems $S\colon \calF \to \RR$ and $T\colon \calF \to L_2$ are not uniformly solvable on the unit ball of $\calF$ and thus Theorem~\ref{thm:solvable} does not apply.
And indeed, the integration problem on $\calC_t$ is not weakly solvable. 
In fact, there exists no $\varepsilon$-approximating algorithm for any $\varepsilon>0$:

To see this, let $A\in \mathcal{A}^+$ be an arbitrary algorithm and let $\varepsilon>0$ be arbitrary. We let $x_1,\dots,x_n \in [0,1]$ be the measurement points that the algorithm uses for the input zero. Then, for any $\delta\in (0,1)$, one can easily find a continuous function $f\colon [0,1]\to [0,4\varepsilon]$ such that $f$ equals zero at all those points and the Lebesgue measure of the set $f^{-1}(4\varepsilon)$ is at least $1-\delta$.
We get
\[
 \Vert f \Vert_4 
 \,\le\, 4\varepsilon 
 \quad\text{and}\quad
 \Vert f \Vert_2 \,\ge\, 4\varepsilon \cdot (1-\delta)^{1/2}
\]
and thus $f\in \calC_t$ if $\delta$ is sufficiently small. Since $A(f)=A(0)$ and $S(f)-S(0) \ge 4\varepsilon \cdot (1-\delta)$, the algorithm has an error of at least $2\varepsilon (1-\delta)$ for one of the functions $f$ or zero. Thus, the algorithm is not $\varepsilon$-approximating for the integration problem on $\calC_t$.

In the next section, we present a simple example that follows a similar pattern, but which is weakly solvable.

\subsection{Inverse Poincar\'e}

To give another explicit example, let us consider the approximation problem
\[
 S\colon W_2^1([0,1]) \to L_2([0,1]),
 \quad S(f)\,=\,f,
\]
where $W_2^1([0,1])$ is the univariate Sobolev space of all continuous functions $f\colon [0,1]\to \RR$ that possess a weak derivative $f'\in L_2([0,1])$, equipped with the norm $\Vert f\Vert_{W_2^1}^2 = \Vert f\Vert_2^2+ \Vert f'\Vert_2^2$. 
The input set is given by the cone
\[
 \calC_t \,:=\, \left\{ f\in W_2^1([0,1]) \colon \Vert f' \Vert_2 \le t \Vert f \Vert_2 \right\}
\]
of all functions from $\calF$ satisfying a reverse Poincar\'e type inequality with constant $t>0$
and the information is given by the class $\Lambda^{\rm std}$ of all function evaluations.
This situation matches Assumption~\ref{ass:general-cones} if $\calF=W_2^1$, $\calG=\calH=L_2$, and $T=S$, where the cone $\calC_t$ from above is contained in the cone from \eqref{eq:cone_simple} with inflation factor $\sqrt{1+t^2}$. 
We can therefore apply Theorem~\ref{thm:solvable}.

It is a well-known fact that the problem $S$ is uniformly solvable on balls in $\calF$ and that there is a constant $c>0$ such that
\[
 \comp(\varepsilon,S,B_\calF,\Lambda^{\rm std}) \,\le\, c \varepsilon^{-1}
\]
for all $\varepsilon>0$, see, e.g., \cite[Section~4.2.4]{NW08}.
Thus, Theorem~\ref{thm:solvable} yields that the $L_2$-approximation problem is weakly solvable on the full cone $\calC_t$ and gives rise to $\varepsilon$-approximating algorithms $A_\varepsilon$ that satisfy
\[
    \cost(A_\varepsilon,f) 
     \,\le\, 14\, c\, t \cdot \varepsilon^{-1}  \Vert f \Vert_2
    \]
for all $f\in \calC_t$ with $\Vert f\Vert_2 \ge \varepsilon$. 

We conclude this example by noting that also the integration problem is weakly solvable on $\calC_t$. The algorithm $A_\varepsilon^*$ defined by $A_\varepsilon^*(f) := \int_0^1 A_\varepsilon(f)(x)\,{\rm d}x$ satisfies the same cost bound and is an $\varepsilon$-approximating algorithm for the integration problem on $\calC_t$ since
\[
 \bigg| \int_0^1 f(x)\,{\rm d}x - \int_0^1 A_\varepsilon(f)(x)\,{\rm d}x \bigg|
 \,\le\, \int_0^1 |f(x) - A_\varepsilon(f)(x)|\,{\rm d}x
 \,\le\, \big\Vert f - A_\varepsilon(f) \big\Vert_2.
\]

\subsection{Diagonal operators between Banach spaces}\label{sec:diagonal}

We now consider a more general example analogously to \cite{DHKM20}.

Let $\calF$ and $\calG$ be two Banach spaces and let $S\colon \calF \to \calG$ be a continuous linear operator. 
We assume that there exist unconditional Schauder bases $\{u_{\bsk}\}_{\bsk\in\mathbb{I}}$ of $\calF$ 
and $\{v_{\bsk}\}_{\bsk\in\mathbb{I}}$ of $\calG$ such that
\begin{equation}\label{eq:cond_cones_S_basic}
 S (u_{\bsk})=v_{\bsk},\quad \forall\bsk\in\mathbb{I}.
\end{equation}
Here, $\mathbb{I}$ is an arbitrary countable set.
By the spectral theorem, this situation occurs, for example, if $\calF$ and $\calG$ are separable Hilbert spaces and $S$ is a compact and injective linear operator with a dense range. In this case, we can choose $\{u_{\bsk}\}_{\bsk\in\mathbb{I}}$ as an orthonormal basis of eigenfunctions of $S^*S$. But also in the general case, all functions $f\in\calF$ and $g\in\calG$ can be written as a convergent series,
\begin{align*}
    f\,=\,\sum_{\bsk\in\mathbb{I}}  (f,u_{\bsk})_{\calF}\,\cdot u_{\bsk},
    \qquad 
    g\,=\,\sum_{\bsk\in\mathbb{I}}  (g,v_{\bsk})_{\calG}\,\cdot v_{\bsk}, 
\end{align*}
where the coefficients $(f,u_{\bsk})_{\calF}$ and $(g,v_{\bsk})_{\calG}$ are unique.
We have for all $f\in\calF$ that
\[
 S(f) \,=\, \sum_{\bsk\in\mathbb{I}}  (f,u_{\bsk})_{\calF}\,\cdot v_{\bsk}.
\]
Let $\mathbb{I}_0\subseteq \mathbb{I}$ be a finite index set. We denote the span of the functions $u_{\bsk}$, $\bsk\in\mathbb{I}_0$, by $\calF_0$ and the span of the functions $v_{\bsk}$, $\bsk\in\mathbb{I}_0$, by $\calG_0$. Moreover, we consider the basis projections $T\colon \calF \to \calF_0$ and $Q\colon \calG \to \calG_0$, given by
\[
T(f) = \sum_{\bsk\in\mathbb{I}_0} (f,u_{\bsk})_{\calF}\,\cdot u_{\bsk},
\qquad
Q(g) = \sum_{\bsk\in\mathbb{I}_0} (g,v_{\bsk})_{\calG}\,\cdot v_{\bsk},
\]
which are bounded linear operators.
Let further $S_0$ be the restriction of $S$ to $\calF_0$. Consider the cone $\calC_t$ defined analogously to \cite[Equation (7.18)]{DHKM20}, i.e.,
\begin{equation}\label{eq:cone_simple}
 \calC_t:= \left\{ f \in \mathcal F \colon \Vert f \Vert_{\mathcal F} \le t \Vert Tf \Vert_{\mathcal F} \right\}, \quad t\ge 1.
\end{equation}
While this problem was dealt with in \cite{DHKM20} for the case of arbitrary linear information $\Lambda = \Lambda^{\rm all}$, it remained an open question 
how to deal with it if one allows only standard information, 
which seems to be more relevant for practical cases. 
In our approach, any $\Lambda \subseteq \Lambda^{\rm all}$ is allowed,
and we may consider $\Lambda = \Lambda^{\rm std}$ whenever $\calF$ is a space of functions.

The problem matches our Assumption~\ref{ass:general-cones} if we let $\calH=\calF$.
In particular, Theorem~\ref{thm:solvable} applies. That is, the problem is weakly solvable on $\calC_t$, whenever $S$ and $T$ are solvable on the unit ball of $\calF$. We want to argue that the solvability of $S$ implies the solvability of $T$ and simplify the cost bound from Theorem~\ref{thm:solvable}. To this end, let $A_n$ be an algorithm for $S$ with fixed cost $n$. We put $M_n = S_0^{-1} Q A_n$. This is an algorithm for the approximation of $T$ that uses at most $n$ measurements. We have
\[
 \Vert Tf - M_nf \Vert_\calF
 \,\le\, 		\Vert S_0^{-1} \Vert \cdot \Vert STf - Q A_n f \Vert_\calG
  \,\overset{ST=QS}{\le}\, 		\Vert S_0^{-1} \Vert \cdot \Vert Q \Vert \cdot \Vert Sf - A_n f \Vert_\calG.
\]
This means that
\[
 \comp(\varepsilon,T,B_\calF,\Lambda)
 \,\le\, \comp\left( \frac{\varepsilon}{\Vert S_0^{-1} \Vert \cdot \Vert Q \Vert}, S,B_\calF,\Lambda \right).
\]
Summing up, we obtain the following corollary.

\begin{corollary}\label{cor:cost-on-cone}
    Consider the problem $(S,\calC_t,\Lambda)$ as defined in this section.
    If $S$ is uniformly solvable on the unit ball, then $S$ is weakly solvable on $\calC_t$. There exist $\varepsilon$-approximating algorithms $A_\varepsilon$ using information from $\Lambda$ such that, for all $f\in \calC_t$, we have
\begin{equation}\label{eq:cost-on-cone}
    \cost(A_\varepsilon,f) \,\le\,
    2 \cdot \comp\left( \min\left\{ \varepsilon_0\,, \frac{c\cdot \varepsilon}{\Vert f \Vert_\calF} \right\}, S, B_\calF, \Lambda \right),
\end{equation}
where 
\[
 \varepsilon_0 \,:=\, (5t \cdot \Vert S_0^{-1} \Vert \cdot \Vert Q \Vert)^{-1}
 \quad\text{and}\quad
 c \,:=\, (7t\,\Vert T\Vert)^{-1}.
\]
\end{corollary}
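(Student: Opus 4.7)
The plan is to invoke Theorem~\ref{thm:solvable} directly with $\calH=\calF$, and then simplify the two summands in the cost bound \eqref{eq:cost_bound_th2} using (i) the complexity comparison between $T$ and $S$ derived immediately before the corollary, and (ii) the boundedness of $T$. Weak solvability itself is automatic once we know that the solvability of $S$ on $B_\calF$ forces the solvability of $T$ on $B_\calF$: the displayed inequality $\comp(\varepsilon,T,B_\calF,\Lambda)\le\comp(\varepsilon/(\Vert S_0^{-1}\Vert\Vert Q\Vert),S,B_\calF,\Lambda)$ (already established in the text via the auxiliary algorithm $M_n=S_0^{-1}QA_n$) shows that if the right-hand side is finite for every positive argument, so is the left-hand side. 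Thus Assumption~\ref{ass:general-cones} is met and Theorem~\ref{thm:solvable} yields an $\varepsilon$-approximating algorithm $A_\varepsilon\in\mathcal{A}^+$ with the cost bound \eqref{eq:cost_bound_th2}.

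Next I would massage the two summands of \eqref{eq:cost_bound_th2}. For the first summand, substituting $\varepsilon=1/(5t)$ into the comparison bound gives
\[
\comp\!\left(\tfrac{1}{5t},T,B_\calF,\Lambda\right)\,\le\,\comp\!\left(\tfrac{1}{5t\,\Vert S_0^{-1}\Vert\,\Vert Q\Vert},\,S,B_\calF,\Lambda\right)\,=\,\comp(\varepsilon_0,S,B_\calF,\Lambda).
\]
For the second summand, since $T\colon\calF\to\calF$ is a bounded linear operator, $\Vert Tf\Vert_\calH=\Vert Tf\Vert_\calF\le\Vert T\Vert\,\Vert f\Vert_\calF$, whence $\varepsilon/(7t\Vert Tf\Vert_\calH)\ge c\varepsilon/\Vert f\Vert_\calF$. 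Because $\comp(\cdot,S,B_\calF,\Lambda)$ is monotonically decreasing in its first argument, this yields
\[
\comp\!\left(\tfrac{\varepsilon}{7t\Vert Tf\Vert_\calH},S,B_\calF,\Lambda\right)\,\le\,\comp\!\left(\tfrac{c\varepsilon}{\Vert f\Vert_\calF},S,B_\calF,\Lambda\right).
\]

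Finally, I combine the two bounds by the elementary observation that for a decreasing function $\comp$ and any positive reals $\alpha,\beta$, $\comp(\alpha)+\comp(\beta)\le 2\max\{\comp(\alpha),\comp(\beta)\}=2\,\comp(\min\{\alpha,\beta\})$. Applied to $\alpha=\varepsilon_0$ and $\beta=c\varepsilon/\Vert f\Vert_\calF$, this gives exactly the claimed bound \eqref{eq:cost-on-cone}. None of the steps is really hard; the only mild subtlety is to keep the roles of $\varepsilon_0$ (an absolute threshold governing the recovery of $T$) and $c\varepsilon/\Vert f\Vert_\calF$ (the input-dependent accuracy needed for $S$) straight, so that the final $\min$ and the factor $2$ are justified cleanly.
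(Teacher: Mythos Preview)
Your proposal is correct and follows exactly the route the paper intends: the text preceding the corollary already establishes $\comp(\varepsilon,T,B_\calF,\Lambda)\le\comp(\varepsilon/(\Vert S_0^{-1}\Vert\Vert Q\Vert),S,B_\calF,\Lambda)$ and then simply says ``Summing up, we obtain the following corollary,'' so your bounding of the two summands of \eqref{eq:cost_bound_th2} and the final $\comp(\alpha)+\comp(\beta)\le 2\,\comp(\min\{\alpha,\beta\})$ step are precisely the details the paper leaves implicit.
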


Let us see how this formula looks like in the Hilbert space setting mentioned after equation~\eqref{eq:cond_cones_S_basic}, where $S$ is a compact operator between Hilbert spaces. Here, $\Vert T\Vert = \Vert Q \Vert = 1$ and $\Vert S_0^{-1} \Vert$ is the reciprocal of the smallest singular number $\sigma_{\rm min}$ of the mapping $S_0$. Thus, we have
\[
 \cost(A_\varepsilon,f) 
     \,\le\,
     2\, \comp\left( \frac{\varepsilon}{7t \Vert f \Vert_\calF} ,S, B_\calF,\Lambda\right)
\]
for all $0<\varepsilon<\sigma_{\rm min}\cdot \Vert f \Vert_\calF$.

\subsubsection{Simple cones based on a pilot sample}\label{sec:cone_simple}

In the paper \cite{DHKM20}, a setting as in the beginning of Section \ref{sec:diagonal} is considered, but with more particular requirements regarding the norms in $\calF$ and $\calG$. Indeed, let 
us additionally assume that 
\begin{align*}
    \norm{f}_{\calF} &:= \norm{\left(\frac{(f,u_{\bsk})_{\calF}}{\lambda_{\bsk}}\right)_{\bsk\in \mathbb{I}}}_{\rho},\quad \mbox{for some}\ \rho\in [1,\infty],\\
    \norm{g}_{\calG} &:= \norm{\left((g,v_{\bsk})_{\calG}\right)_{\bsk\in \mathbb{I}}}_{\tau},\quad \mbox{for some}\ \tau\in [1,\rho],
\end{align*}
where the $\lambda_{\bsk}$ are positive reals, which we assume to be ordered:
\[
\lambda_{\bsk_1} \ge \lambda_{\bsk_2}\ge \lambda_{\bsk_3}\ge \cdots >0.
\]
In this context, we write $\norm{(\bsx_{\bsk})_{\bsk\in\mathbb{I}}}_{\rho}$ to denote the $\ell_\rho$-norm of the sequence $(\bsx_{\bsk})_{\bsk\in\mathbb{I}}$, and analogously for $\ell_\tau$. 
We again consider $S:\calF\rightarrow\calG$ such that 
$S(u_{\bsk})=v_{\bsk}$ for all $\bsk\in \mathbb{I}$, 
and choose $\mathbb{I}_0$ as the set of integers $\{1,2,\ldots,n_1\}$. Then the cone in \eqref{eq:cone_simple} takes the special form
\[
\calC_t=\left\{f\in\calF\colon \norm{f}_{\calF} \le t \,
\norm{\left(\frac{(f,u_{\bsk_i})_{\calF}}{\lambda_{\bsk_i}}\right)_{1\le i \le n_1}}_{\rho}
\right\}.
\]
This means that the cone $\calC_t$ consists of functions whose norm can be bounded in terms of the pilot sample 
$(f,u_{\bsk_1})_{\calF},\ldots, (f,u_{\bsk_{n_1}})_{\calF}$, 
and the mapping $T$ is given by 
\[
T(f)=\sum_{i=1}^{n_1} (f, u_{\bsk_i})_{\calF}\cdot v_{\bsk_i}.
\]

The paper \cite{DHKM20} then studies an adaptive algorithm $A_n$, defined by
 \[
 A_n (f)=\sum_{i=1}^n (f, u_{\bsk_i})_{\calF}\cdot v_{\bsk_i},
 \]
 where $n=n(f,\varepsilon)$ is allowed to depend on $f\in\calF$ and on an error threshold $\varepsilon>0$, and in general $n$ may be different from $n_1$. The definition of $A_n$
 requires that the information class $\Lambda$ allows access to $(f, u_{\bsk_i})_{\calF}$. This is in general not fulfilled if we consider function spaces and the information class $\Lambda^{\rm std}$,
but for the sake of comparison
let us consider the case that $\Lambda$ contains the measurements 
$(f, u_{\bsk_i})_{\calF}$.

Using the notation in Theorem \ref{thm:solvable} and its proof, we are allowed to approximate $T$ by $Q_m=T$, where $m=n_1$, because we assume that we have access to the $(f, u_{\bsk_i})_{\calF}$. Hence we obviously have $\comp(\varepsilon,T,B_\calF,\Lambda)\le n_1$ for any error threshold $\varepsilon$.

Then Theorem \ref{thm:solvable} or Corollary \ref{cor:cost-on-cone} implies the existence of an 
$\varepsilon$-approximating algorithm $A_{\varepsilon}$ such that 
\[
     \cost(A_\varepsilon,f) 
     \,\le\, 
     \comp\left(\frac{\varepsilon}{7 t\,\Vert Tf \Vert_\calF} ,S,B_\calF,\Lambda\right)
     + n_1 
     .
\]
As outlined in \cite{DHKM20}, we have 
\begin{equation}\label{eq:comp_Hick_et_al}
\comp\left(\frac{\varepsilon}{7 t\,\Vert Tf \Vert_\calF} ,S,B_\calF,\Lambda\right)
=\min\left\{n\in\NN_0\colon \norm{\left((\lambda_{\bsk})_{\bsk\ge n}\right)}_{\rho'} \le \frac{\varepsilon}{7 t\,\Vert Tf \Vert_\calF}\right\},
\end{equation}
where $\rho'$ is such that $1/\rho + 1/\rho'=1/\tau$. In order to determine 
the minimum in \eqref{eq:comp_Hick_et_al}, one needs to find the minimal $n$ such that 
\[
\Vert Tf \Vert_\calF \cdot 
\norm{\left((\lambda_{\bsk})_{\bsk\ge n}\right)}_{\rho'}
=
\norm{\left(\frac{(f,u_{\bsk_i})_{\calF}}{\lambda_{\bsk_i}}\right)_{1\le i \le n_1}}_{\rho} \cdot
\norm{\left((\lambda_{\bsk})_{\bsk\ge n}\right)}_{\rho'}\le \frac{\varepsilon}{7t}.
\]
This corresponds to the analysis done in \cite[Proof of Theorem 7.5]{DHKM20} and leads (up to constants) to the same result. Indeed, it is shown in that paper that the cost is essentially optimal. Furthermore, using these findings, one can then analyze the problem from the viewpoint of Information-Based Complexity, and do a tractability analysis as discussed in \cite{DHKM20}; differences in the constant are due to the fact that the results in \cite{DHKM20} are tailored to this particular situation, whereas Theorem \ref{thm:solvable} and Corollary \ref{cor:cost-on-cone} are formulated in a much more general way.

A drawback of the approach just outlined is that we need information that is 
in general not contained in $\Lambda^{\rm std}$, but we need direct access to the $(f,u_{\bsk_i})_{\calF}$. If this information is not available, then 
one way is to approximate the $(f,u_{\bsk_i})_{\calF}$, possibly by only using information from $\Lambda^{\rm std}$, which may be more practical for applications. In this case, $Q_m$ will in general no longer be equal to $T$ as above, but a proper approximation of $T$. Theorem~\ref{thm:solvable} and Corollary~\ref{cor:cost-on-cone}  enable us to analyze this situation. We present an example of such a situation in the following subsection.

\subsubsection{Simple cones based on a pilot sample: $L_2$-approximation in weighted Korobov spaces using standard information}

We now give a concrete example of the situation described in Section \ref{sec:cone_simple}, but with an algorithm that only uses information from $\Lambda^{\rm std}$.

Let $\calG=L_2 ([0,1]^d)$ and let $\mathcal{F} \subseteq L_2 ([0,1]^d$ be the weighted Korobov space of one-periodic functions on $[0,1]^d$ with smoothness $\alpha>1/2$, i.e.,
\[
\mathcal{F}=\left\{f\colon f(\bsx)=\sum_{\bsk\in \ZZ^d} \widehat{f}(\bsk)e^{2\pi\icomp\bsk\cdot\bsx}, \quad
\norm{f}_{\calF}:= \norm{\left(\widehat{f}(\bsk) \sqrt{r_{2\alpha,\bsgamma} (\bsk)}\right)_{\bsk\in\ZZ^d}}_2 < \infty\right\}.
\]
Here, $\bsgamma=(\gamma_j)_{j\ge 1}$ is a non-increasing sequence of reals in $(0,1]$, and
\[
r_{2\alpha,\bsgamma}(\bsk)=\prod_{j=1}^d r_{2\alpha,\gamma_j} (k_j),\quad \mbox{with}\quad 
r_{2\alpha,\gamma_j}(k_j)=\begin{cases}
                            1 & \mbox{if $k_j=0$,}\\
                            \gamma_j^{-1}\,\abs{k_j}^{2\alpha} & \mbox{otherwise,}
                          \end{cases}
\]
for $\bsk=(k_1,\ldots,k_d)$ (note that we always have $r_{2\alpha,\bsgamma}(\bsk)\ge 1$).
The space $\calF$ is a function class commonly considered in the literature on quasi-Monte Carlo rules, see, e.g., \cite{DKP22}.

Let $S(f)=f$, i.e., $S$ is the embedding of $\calF$ in $\calG$.
This means that in the notation of the previous section we have $\rho=\tau=2$, $\mathbb{I}=\ZZ^d$, $u_{\bsk}=v_{\bsk}=e^{2\pi\icomp\bsk\cdot \circ}$, and 
\[
\lambda_{\bsk}= (r_{2\alpha,\bsgamma} (\bsk))^{-1/2},\quad \mbox{for}\ \bsk\in\ZZ^d.
\]
For a constant $M>1$, let
\[
  \mathbb{I}_0=\mathcal{A}_{d,M}:= \{\bsk\in\ZZ^d\colon r_{2\alpha,\bsgamma} (\bsk)\le M\},
\]
so $\mathcal{A}_{d,M}$ contains the indices $\bsk$ corresponding to the ``largest'' Fourier coefficients of a given $f\in\mathcal{F}$, or putting it differently, $\mathcal{A}_{d,M}$ contains the collection of those $\abs{\mathcal{A}_{d,M}}$ indices $\bsk$ for which the values of $\lambda_{\bsk}=\sqrt{r_{2\alpha,\bsgamma}(\bsk)}$ are the largest.
In the notation of Section \ref{sec:cone_simple}, $\abs{\mathcal{A}_{d,M}}=n_1$.

Let the cone $\calC_t$ be defined as in \eqref{eq:cone_simple}, which in this case yields
\[
\calC_t=\left\{f\in\calF\colon \norm{f}_{\calF} \le t \,
\norm{\left(\widehat{f}(\bsk) \sqrt{r_{2\alpha,\bsgamma}(\bsk)}\right)_{\bsk\in \mathcal{A}_{d,M}}}_{2}
\right\}.
\]
Let also the mappings $T$, $Q$, and $S_0$ be defined as in the beginning of Section \ref{sec:diagonal}.

As shown in \cite{DHKM20}, an---in the sense of the cost---optimal 
algorithm using information from $\Lambda^{\rm all}$ would be to approximate $f\in\calC_t$ by the truncated Fourier series
\[
A_n (f)=\sum_{\bsk \in \calA_{d,M'}} \widehat{f}(\bsk) e^{2\pi\icomp \bsk\cdot\bsx},
\]
with $M'=M'(\varepsilon,f)$ chosen adaptively, and $\calA_{d,M'}$ defined anaolously to $\calA_{d,M}$
which exactly corresponds to the algorithm $A_n$ in Section \ref{sec:cone_simple}. 

However, we would now like to take a different route, where we only use information from $\Lambda^{\rm std}$. Recall that it is described in Remark~\ref{rem:explicit-algorithms} how we can find an $\varepsilon$-approximating algorithm on the cone $\calC_t$ using only information from $\Lambda^{\rm std}$.
The algorithm consists of two successive parts:
\begin{itemize}
    \item a homogeneous algorithm $Q_m$ for approximating $T$ whose worst case error on the unit ball is smaller than the constant $1/(2t)$,
    \item and a homogeneous algorithm $A_k$ for approximating $S$ whose worst case error on the unit ball is smaller than $\varepsilon/(2t\Vert Q_m f \Vert_\calH)$.
\end{itemize}

For the first part, we can consider an algorithm 
$Q_m=\widetilde{A}_m$ that uses an $N$-point rank-1 lattice rule to approximate Fourier coefficients of $f\in \calC_t$, and then replaces the $\widehat{f}(\bsk)$ by these approximations.
Such algorithms were analyzed in, e.g., \cite{KSW06}, see also \cite{DKP22} for an overview. The integration nodes used in the lattice rule are given by
\[
\bsx_{\ell}=\left\{\frac{\ell\bsg}{N}\right\},\quad 0\le \ell\le N-1,
\]
where $\{y\}=y-\lfloor y \rfloor$ denotes the fractional part of a real $y$, and where $\bsg$ is a suitably chosen \textit{generating vector} of the lattice rule. Further details on the choice of such $\bsg$ can be found again in \cite{KSW06} and \cite{DKP22}. Then, $\widetilde{A}_m$ is
defined by
\begin{equation}\label{eq:lattice}
(\widetilde{A}_m (f))(\bsx):=
\sum_{\bsk\in\mathcal{A}_{d,M}} 
\left(\frac{1}{N}\sum_{\ell=0}^{N-1} f\left(\left\{\frac{\ell\bsg}{N}\right\}\right)e^{-2\pi\icomp \ell \bsk\cdot\bsg/N}\right)e^{2\pi\icomp \bsk\cdot\bsx}
\end{equation}
for $f\in\calC_t$ and $\bsx\in [0,1]^d$. Note that $\widetilde{A}_m$ is linear, and in particular homogeneous. Moreover, $\widetilde{A}_m$ uses 
$m=N$ function evaluations as information measurements.

For the algorithm $A_k$ in the second part, one can again use an algorithm of the form \eqref{eq:lattice}
with index set $\mathcal{A}_{d,M'}$ instead of $\mathcal{A}_{d,M}$ and lattice size $k$ instead of $N$, where $M'$ and $k$ are adapted to the outcome of $\widetilde{A}_m(f)$.
Alternatively, we can use a least-squares estimator of the form 
\[
 A_k(f) \,=\, \underset{g\in \mathcal{T}(\mathcal{A}_{d,M'})}{\rm argmin} \, \sum_{i=1}^k\left| f(x_i) - g(x_i)\right|^2.
\]
Here, $\mathcal{T}(I)$ denotes the space of all trigonometric polynomials with frequencies from $I\subseteq \ZZ^d$.
This way, the information cost $k$ will be much lower than with algorithms of the form \eqref{eq:lattice}, 
see \cite{DKU,KU1,NSU} for upper bounds using least-squares
and \cite{BKUV,S60} for lower bounds using lattices. 
The points $x_i$ of the least-squares algorithm can simply be chosen as realizations of i.i.d.\ uniformly distributed random variables on $[0,1]^d$, see \cite{KU1}. 
The drawback of this construction is that it only works with high probability
and there is always a (very) small probability
that the worst-case error of the resulting algorithm $A_k$ will be larger than the desired threshold $\varepsilon/(2t\Vert Q_m f \Vert_\calH)$.

No matter how we decide,
we end up with
an $\varepsilon$-approximating algorithm using information from $\Lambda^{\rm std}$ such that \eqref{eq:cost-on-cone} holds, with the same notation as in Corollary~\ref{cor:cost-on-cone}. We stress that this is an advancement in comparison to the results in \cite{DHKM20}, where there were no tools available to analyze 
algorithms and complexity for information from $\Lambda^{\rm std}$.

\subsubsection{Outlook on future work: Multi-layer cones}\label{sec:cone_not_simple}

As mentioned in \cite[Section 7.4]{DHKM20}, it may be advantageous to consider a definition of cones that modifies the definition 
\eqref{eq:cone_simple}, in order to keep better track of the decay of the 
quantities $(f,u_{\bsk})_{\calF}$. This would help in limiting the cost of adaptive algorithms in situations where the $(f,u_{\bsk})_{\calF}$ decay fast.
Indeed, consider a partition of $\mathbb{I}$ into a countable number of pairwise disjoint subsets 
$(\mathbb{I}_j)_{j\ge 0}$, i.e., $\mathbb{I}=\bigcup_{j=0}^\infty \mathbb{I}_j$. 
For $j\ge 1$, define
\[
 T_j \colon \mathcal F \to \mathcal F, \quad T_j(f):= \sum_{\bsk\in\mathbb{I}_j} (f,u_{\bsk})_\calF \cdot u_{\bsk}.
\]
That is, the $T_j$ are the basis projections corresponding to the finite index sets $\mathbb{I}_j$.

Then, as in \cite[Section 7.4]{DHKM20}, one can modify \eqref{eq:cone_simple} to
\begin{equation*}
 \widetilde{\mathcal{C}}:= \left\{ f \in \mathcal F \colon \Vert T_{j+\ell} f \Vert_{\mathcal F} \le t v^{\ell} \Vert T_j f \Vert_{\mathcal F}, \ \forall j,\ell\in\NN \right\},
\end{equation*}
where $v<1<t$. 
Opposed to the cones considered in \eqref{eq:cone_simple}, which were defined via a single reference layer, let us call $\widetilde \calC$ a multi-layer cone.

One quickly realizes (using the triangle inequality) that such multi-layer cones are actually contained in a single-layer cone as in \eqref{eq:cone_simple} with $T$ being the basis projection for coefficients $\bsk\in \mathbb{I}_0 \cup \mathbb{I}_1$ and suitably chosen inflation factor.
As such, solvability statements and upper bounds carry over from Corollary~\ref{cor:cost-on-cone}.

We leave it for future research to improve upon the bounds resulting from this inclusion. Noting that the information class $\Lambda^{\rm all}$ has been dealt with in \cite{DHKM20}, 
this problem is especially interesting for the case of information from the class $\Lambda^{\rm std}$.

\medskip

\textbf{Acknowledgement.} \ We would like to thank Aicke Hinrichs and Erich Novak for fruitful discussions on the optimality of homogeneous algorithms.
We also thank Robert Kunsch and three anonymous referees for helpful feedback. David Krieg is supported by the Austrian Science Fund (FWF) Project M~3212-N. Peter Kritzer acknowledges the support of the Austrian Science Fund (FWF) Project P34808.
For open access purposes, the authors have applied a CC BY public copyright license to any author accepted manuscript version arising from this submission. Research work on this paper was partly carried out during the authors' stays
at the MATRIX Institute in Creswick, Australia, during the program ``Computational Mathematics for High-Dimensional Data in Statistical Learning'' (Feb. 2023), and at Schloss Dagstuhl, Germany, during the
Dagstuhl Seminar 23351 ``Algorithms and Complexity for Continuous Problems'' (Aug.~2023); we thank both institutions for their hospitality.


\begin{small}

	\noindent\textbf{Authors' addresses:}\\

        \noindent David Krieg\\
        Department of Analysis\\
	Johannes Kepler University Linz\\
	Altenbergerstr. 69, 4040 Linz, Austria\\
	\texttt{david.krieg@jku.at}\\
 
	\noindent Peter Kritzer\\
	Johann Radon Institute for Computational and Applied Mathematics (RICAM)\\
	Austrian Academy of Sciences\\
	Altenbergerstr. 69, 4040 Linz, Austria\\
	\texttt{peter.kritzer@oeaw.ac.at}\\

\end{small}

\end{document}